\def\NZQ{\mathbf}               
\def\CC{{\NZQ C}}
\theoremstyle{definition}
\theoremstyle{remark}
\newtheorem{Theorem}{Theorem}[section]
\newtheorem{Lemma}[Theorem]{Lemma}
\newtheorem{Corollary}[Theorem]{Corollary}
\newtheorem{Proposition}[Theorem]{Proposition}
\let\epsilon\varepsilon
\let\phi=\varphi
\let\kappa=\varkappa
\begin{document}

\title{Associated Primes of the Square of the Alexander Dual of Hypergraphs}

\author{Ashok Cutkosky}
\begin{abstract} 
The purpose of this paper is to provide methods for determining the associated primes of $(I(H)^\vee)^2$ for an m-hypergraph $H$. We prove a general method for detecting associated primes of the square of the Alexander dual of the edge ideal based on combinatorial conditions on the m-hypergraph. Also, we demonstrate a more efficient combinatorial criterion for detecting the non-existence of non-minimal associated primes. In investigating 3-hypergraphs, we prove a surprising extension of the previously discovered results for 2-hypergraphs (simple graphs). For 2-hypergraphs, associated primes of the square of the Alexander dual of the edge ideal are either of height 2 or of odd height greater than 2. However, we prove that in the 3-hypergraph case, there is no such restriction - or indeed any restriction - on the heights of the associated primes. Further, we generalize this result to any dimension greater than 3. Specifically, given any integers $m$, $q$, and $n$ with $3\leq m\leq q\leq n$, we  construct a $m$-hypergraph of size $n$ with an associated prime of height $q$. We further prove that it is possible to construct  connected $m$-hypergraphs under the same conditions.

\end{abstract}

\maketitle

\section*{Introduction}

An $m$-hypergraph is a vertex set $\{x_1,...,x_n\}$ and an edge set $$\{\{x_{i_1},...,x_{i_m}\},\{x_{j_1},...,x_{j_m}\},...\}$$ where each edge joins $m$ vertices. Many combinatorial properties of hypergraphs have been defined and studied. However, it is possible, and often advantageous, to study hypergraphs from an algebraic rather than a combinatorial point of view. This is done by associating to the vertex set a polynomial ring in $n$ variables over a field and associating to the edge set an ideal, $I$, called the edge ideal in that ring. The edge ideal is the ideal generated by monomials $x_{i_1}\cdots x_{i_m}$ where $\{x_{i_1},...,x_{i_m}\}$ is an edge. There is a further function called the Alexander dual that transforms the edge ideal of a hypergraph into another ideal, $I^\vee$, obtained via intersection of ideals of the form $(x_{i_1},...,x_{i_m})$ where $\{x_{i_1},...,x_{i_m}\}$ is an edge. This new ideal has the interesting property that the minimal generators correspond exactly to the minimal vertex covers of the hypergraph. We can also take symbolic powers of the Alexander dual, which have slightly different properties. In particular, the symbolic $k$th power of the Alexander dual is generated by the monomials corresponding to the $k$-covers of the hypergraph, while the ordinary $k$th powers of the Alexander dual are generated by the $k$-covers that are sums of $1$-covers, a phenomenon that has been studied by Herzog, Hibi and Trung \cite{HHT}, Gitler, Reyes and Villarreal \cite{GRV}, Fransisco, H\`a and Van Tuyl \cite{FHT} and other mathematicians. Herzog, Hibi and Trung prove in \cite{HHT} that  the algebra of vertex covers of a hypergraph (the symbolic algebra of the Alexander dual) is finitely generated. It is proven in \cite{FHT} that in the case of 2-hypergraphs (or simple graphs), the associated primes of $(I^\vee)^2$ correspond to the edges of the graph, and to the odd induced cycles. 

We prove in Theorem \ref{multd} of this paper that for any integers $n$, $q$ and $m$ with $n\geq q\geq m\geq3$, there exists an $m$-hypergraph of any size $n$ such that the square of the Alexander dual of the edge ideal of the $m$-hypergraph has an associated prime of height $q$. We also prove that for $m$-hypergraphs, it is not possible to have an associated prime of height less than $m$. Further, we have a generalized statement (Theorem \ref{indt}) for m-hypergraphs that an associated prime for an induced subhypergraph is an associated prime for the full hypergraph. Using the above statements, one can prove that for any dimension higher than 2, there are connected hypergraphs of any size with any possible height of associated prime. These results are in stark contrast with the behavior of 2-hypergraphs, where it is not possible to obtain associated primes of even height greater than 2 (Theorem 1.1 \cite{FHT}). Also, we have in Theorem \ref{finder} a necessary and sufficient combinatorial criterion for determining the existence of and an explicit description for the associated primes. Briefly, the method involves finding a 2-cover that is not the sum of two 1-covers - this represents an element not in $(I(H)^\vee)^2$. Next, we note which, if any, components it is possible to increase by 1 to make a 2-cover that is the sum of 1-covers, and prove that adding anything to any other component does not make a sum of two 1-covers. Then the components noted in the second step correspond to the generators of an associated prime. This method provides a nice combinatorial gateway between the structure of the hypergraph and the structure of the algebra.

While we have found numerous examples to show that the elements of a cycle do not necessarily generate an associated prime, it is an interesting open question as to whether the converse is true. Further, although in many examples the associated primes of higher powers of the Alexander dual appear to be the same as for the second power, this is not true in general. Two examples due to Christopher Francisco in which the third power of the Alexander dual has a number of associated primes which are not associated primes for the square are the 3-hypergraph with edge ideal $$(x_2x_4x_8, x_3x_5x_6, x_4x_7x_9, x_6x_8x_9,x_3x_4x_7, x_1x_2x_9, x_1x_4x_6, x_1x_8x_9, x_2x_5x_9,x_3x_4x_6)$$ and the complete 2-hypergraph on five vertices.

I would like to thank Dr. Christopher Francisco for his guidance in this project.

\section{Tools for Detecting Associated Primes}

An m-hypergraph, $H$, is a vertex set $V=\{x_1,x_2,...,x_n\}$ and an edge set $E$ such that the elements of $E$ are subsets of $V$ containing $m$ distinct elements. The size of $H$ is said to be the size of $V$. A 2-hypergraph is a simple graph (a graph with no loops or multiple edges).

The edge ideal $I$ of an m-hypergraph $H$ of size $n$ with edge set $E$ and vertex set $V$ is the ideal generated by the monomials formed by multiplying all the vertices that make up an edge in a polynomial ring in $n$ variables over a field.
The Alexander dual of an edge ideal $I$ is 
$$
I^\vee=\bigcap_{\{x_{i_1},x_{i_2},...,x_{i_m}\}\in E}(x_{i_1},x_{i_2},...,x_{i_m}).
$$

In a 2-hypergraph with $n$ vertices, a $k$-cover is a non-zero element of $\textbf{N}^n$, $(a_1,a_2,...,a_n)$, such that for every edge $\{x_i,x_j\}$, $a_i+a_j\geq k$. The definition extends nicely to m-hypergraphs: for every edge $\{x_{i_1},x_{i_2},...,x_{i_m}\}$ we have $a_{i_1}+a_{i_2}+\cdots+a_{i_m}\geq k$. Note that any $k$-cover is a 0-cover.
If a $k$-cover $A$ can be written as $A=B+C$ where $B$ is a $p$-cover and $C$ is a $(k-p)$-cover, then the $k$-cover $A$ is said to be reducible. Otherwise, $A$ is said to be irreducible.
A $k$-cover $(a_1,a_2,...,a_n)$ corresponds to the monomial $x_1^{a_1}x_2^{a_2}\cdots x_n^{a_n}$. For a $k$-cover $A$, we will call the corresponding monomial $\overline{A}$. Note that for any monomial $Q$, there is a $k\ge 0$ and a $k$-cover $P$ such that $Q=\overline{P}$. We also have the observation that for $k$-covers $A$ and $B$, $\overline{A+B}=(\overline{A})(\overline{B}$).

\begin{Lemma}\label{lm1}
For all $k\geq 1$, the symbolic $k$th power of the Alexander dual, defined by
$$
(I^\vee)^{(k)}=\bigcap_{\{x_{i_1},x_{i_2},...,x_{i_m}\}\in E}(x_{i_1},x_{i_2},...,x_{i_m})^k
$$
is generated by the the monomials corresponding to $k$-covers.
\begin{proof}
Since the intersection of powers of monomial ideals is a monomial ideal, it suffices to show that any monomial in $(I^\vee)^{(k)}$ corresponds to a $k$-cover and vice versa. Let $M\in (I^\vee)^{(k)}$ be a monomial. Since $M$ is a monomial, there exists a $0$-cover $P=(p_1,...,p_n)$ such that $M=\overline{P}$. Since $M\in (I^\vee)^{(k)}$, $M\in(x_{i_1},x_{i_2},...,x_{i_m})^k$ for all edges $\{x_{i_1},x_{i_2},...,x_{i_m}\}\in E$. Thus for all edges $\{x_{i_1},x_{i_2},...,x_{i_m}\}\in E$, the sum of the exponents on $x_{i_1},x_{i_2},...,x_{i_m}$ must be at least $k$. Since $M=x_1^{p_1}\cdots x_n^{p_n}$, we have $p_{i_1}+p_{i_2}+\cdots+p_{i_m}\geq k$. Thus $P$ is a $k$-cover. Reversing the argument, we see that for every $k$-cover $P$, $\overline{P}\in (I^\vee)^{(k)}$.
\end{proof}

\end{Lemma}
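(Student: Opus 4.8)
The plan is to reduce the statement to a membership criterion for a single monomial in a power of a monomial prime ideal, and then to intersect that criterion over all edges. First I would observe that each ideal $(x_{i_1},\dots,x_{i_m})^k$ is a monomial ideal, and that an arbitrary intersection of monomial ideals is again a monomial ideal; hence $(I^\vee)^{(k)}$ is a monomial ideal, and it suffices to describe exactly which monomials it contains. Using the observation already recorded in the excerpt, every monomial $M\neq 1$ can be written uniquely as $M=\overline{P}$ for some $P=(p_1,\dots,p_n)\in\mathbf{N}^n$, so the task becomes: show that $\overline{P}\in(I^\vee)^{(k)}$ precisely when $P$ is a $k$-cover.

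The heart of the argument, and the step I expect to require the most care, is the membership criterion for a single prime power $(x_{i_1},\dots,x_{i_m})^k$. I would first prove the elementary fact that this ideal is generated by all monomials of total degree $k$ supported on the variables $x_{i_1},\dots,x_{i_m}$, and then that a monomial $M=x_1^{p_1}\cdots x_n^{p_n}$ is divisible by some such generator if and only if $p_{i_1}+\cdots+p_{i_m}\geq k$. The ``only if'' direction is immediate, since any degree-$k$ generator supported on those variables already forces the total exponent of $M$ on $\{x_{i_1},\dots,x_{i_m}\}$ to be at least $k$. The substantive ``if'' direction asks, given $p_{i_1}+\cdots+p_{i_m}\geq k$, for an explicit degree-$k$ monomial supported on these variables that divides $M$; this is produced by greedily selecting $k$ of the available exponents among $x_{i_1},\dots,x_{i_m}$, which is possible exactly because their total is at least $k$. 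This local combinatorial fact is the one genuinely nontrivial ingredient; everything else is bookkeeping.

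With the single-prime criterion established, the rest assembles routinely. A monomial $\overline{P}$ lies in $(I^\vee)^{(k)}$ if and only if it lies in $(x_{i_1},\dots,x_{i_m})^k$ for every edge, which by the criterion holds if and only if $p_{i_1}+\cdots+p_{i_m}\geq k$ for every edge $\{x_{i_1},\dots,x_{i_m}\}\in E$; this is precisely the defining condition for $P$ to be a $k$-cover. I would also note the nonvanishing requirement built into the definition of a $k$-cover: for $k\geq 1$ the vector $P$ cannot be the zero vector, since at least one edge exists and its exponent sum must reach $k\geq 1$, so the excluded monomial $1=\overline{0}$ never arises. Running this equivalence in both directions shows that the monomial generators of $(I^\vee)^{(k)}$ are exactly the $\overline{P}$ with $P$ a $k$-cover, which is the assertion of the lemma.
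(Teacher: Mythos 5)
Your proposal is correct and follows essentially the same route as the paper's proof: reduce to monomial membership via the fact that an intersection of monomial ideals is a monomial ideal, then translate membership in each $(x_{i_1},\dots,x_{i_m})^k$ into the exponent-sum condition $p_{i_1}+\cdots+p_{i_m}\geq k$ and intersect over all edges. The only difference is that you explicitly prove the single-prime membership criterion (and note that $P=\mathbf{0}$ cannot occur for $k\geq 1$), details the paper simply asserts, so your write-up is a slightly more careful version of the same argument.
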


Thus the following facts are true: The monomials corresponding to irreducible 1-covers are minimal generators for $I^\vee$, and
the monomials corresponding to 2-covers are generators for $(I^\vee)^{(2)}$. Similarly
the monomials corresponding to 2-covers that are the sums of 1-covers are generators of $(I^\vee)^2$.

By localizing at prime ideals, one can see that $(x_{i_1},x_{i_2},...,x_{i_m})$ is a minimal prime of $(I^\vee)^2$ if and only if $\{x_{i_1},x_{i_2},...,x_{i_m}\}\in E$.
If $P_1,P_2,...,P_r$ are the other (possibly non-existent) associated primes of $(I^\vee)^2$ (embedded primes),
\begin{equation}\label{*}
(I^\vee)^2=\left[\bigcap_{\{x_{i_1},x_{i_2},...,x_{i_m}\}\in E}(x_{i_1},x_{i_2},...,x_{i_m})^2\right]\cap Q_1\cap Q_2 \cap... \cap Q_r
\end{equation}
where $Q_i$ is $P_i$-primary.


Suppose that $H$ is an m-hypergraph.
A set of vertices $S$ is an independent set if no edge contains only vertices in $S$.
The neighborhood of an independent set $S$, $N(S)$, is the set of vertices $x_i\notin S$ such that there is an edge $\{x_i,x_{j_1},...,x_{j_m}\}$ such that all of the $x_{j_l}$ are in $S$.
Suppose $S$ is an independent set, and $T$ is the neighborhood of $S$. Define $A_s=(a_1,...,a_n)$ where $a_i=0$ if $x_i\in S$, $a_i=2$ if $x_i\in T$ and $a_i=1$ if $x_i\notin S\cup T$.

\begin{Proposition}\label{prop5} Let $H$ be an m-hypergraph.
\begin{enumerate}
\item [1.] Suppose $S$ is an independent set in $H$. Then $A_s$ is a 2-cover.
\item [2.] Suppose $W$ is a 2-cover, such that $W\neq A_s$ for all independent sets $S$. Then there exists a 2-cover $X=A_s$ for some independent set $S$ and a 0-cover Y such that $W=X+Y$.
\end{enumerate}
\end{Proposition}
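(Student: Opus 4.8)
The plan is to prove both parts by direct combinatorial case analysis on the edges, and for part 2 to build the required independent set \emph{canonically} from the given $2$-cover rather than hoping an arbitrary $S$ works.

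For part 1, I would fix an arbitrary edge $e=\{x_{i_1},\dots,x_{i_m}\}$ and split into cases according to $|e\cap S|$. Since $S$ is independent, no edge lies entirely in $S$, so $|e\cap S|\le m-1$. If $|e\cap S|=m-1$, the unique vertex of $e$ outside $S$ sits in an edge (namely $e$ itself) whose other $m-1$ vertices all lie in $S$, hence it lies in $T=N(S)$ and contributes $2$ to the edge sum while the rest contribute $0$; the sum is exactly $2$. If $|e\cap S|\le m-2$, then at least two vertices of $e$ lie outside $S$, and each such vertex contributes $1$ or $2$, i.e.\ at least $1$, so the edge sum is again at least $2$. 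Thus every edge sum is at least $2$, and since any vertex outside $S$ has positive entry, $A_s\ne 0$ as soon as $S\ne V$ (equivalently, as soon as $H$ has an edge); hence $A_s$ is a $2$-cover.

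For part 2, given a $2$-cover $W=(w_1,\dots,w_n)$ I would set $S=\{x_i : w_i=0\}$. First I would check $S$ is independent: an edge contained in $S$ would have all its entries equal to $0$, contradicting the $2$-cover inequality $a_{i_1}+\cdots+a_{i_m}\ge 2$. Letting $T=N(S)$ and forming $A_s$, the heart of the argument is to show $W\ge A_s$ componentwise, which I would verify region by region. On $S$ both vectors are $0$. On the complement of $S\cup T$ we have $(A_s)_i=1$ and $w_i\ge 1$ since $x_i\notin S$. The only delicate region is $T$: for $x_i\in T$ there is an edge consisting of $x_i$ together with $m-1$ vertices of $S$, all of which have $w$-entry $0$, so the $2$-cover condition forces $w_i\ge 2=(A_s)_i$. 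This inequality is where the hypergraph structure and the $2$-cover hypothesis genuinely interact, and I expect it to be the main (indeed the only nontrivial) point of the whole proposition.

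Finally I would set $X=A_s$ and $Y=W-A_s$. By the componentwise domination just established, $Y\in\mathbf{N}^n$; by part 1, $X$ is a $2$-cover; and $W=X+Y$. Since by hypothesis $W\ne A_s$, the vector $Y$ is nonzero, hence a $0$-cover, completing the decomposition. The conceptual subtlety worth flagging is the \emph{choice} of $S$: taking it to be exactly the zero-set of $W$ is what makes $A_s$ small enough to be dominated by $W$, while $N(S)$ simultaneously captures the entries that the $2$-cover condition forces up to $2$; a less canonical choice of $S$ would break the componentwise inequality.
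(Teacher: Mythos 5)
Your proposal is correct and follows essentially the same route as the paper: the same case analysis on how an edge meets $S$ for part~1 (your split on $|e\cap S|$ is just the complementary phrasing of the paper's ``exactly one versus more than one nonzero entry''), and for part~2 the identical canonical choice $S=\{x_i \mid w_i=0\}$, the same componentwise verification that $w_i\geq 2$ on $N(S)$ and $w_i\geq 1$ off $S\cup N(S)$, and the same decomposition $W=A_s+(W-A_s)$ with nonzeroness of $W-A_s$ coming from $W\neq A_s$. Your explicit remark that $A_s\neq 0$ (which the paper leaves implicit) is a small point in your favor.
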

\begin{proof}
Let $H$ be an m-hypergraph of size $n$ with vertex set $V$ and edge set $E$.

1. Suppose $S$ is an independent set in $H$. Then we have $A_s=(a_1,...,a_m)$ where $a_i=0$ for $x_i\in S$, $a_i=2$ for $x_i\in N(S)$ and $a_i=1$ for $x_i\in V-S-N(S)$. Let $\{x_{i_1},...,x_{i_m}\}$ be an edge in $H$. We must show that $a_{i_1}+a_{i_2}+\cdots+a_{i_m}\geq 2$. Since $S$ is independent, at least one of $x_{i_1},...,x_{i_m}$ is not in $S$. Since $a_i=0$ implies $x_i\in S$, at least one of $a_{i_1},a_{i_2},...,a_{i_m}$ is non-zero. If more than one of these is non-zero, we are done. Suppose exactly one of these, say $a_{i_k}$, is non-zero. Then we have $x_{i_j}\in S$ for $j\neq k$. Thus $x_{i_k}\in N(S)$, so $a_{i_k}=2$, and $a_{i_1}+a_{i_2}+\cdots+a_{i_m}=2\geq 2$. So $A_s$ is a 2-cover.

2. Suppose $W=(w_1,w_2,...,w_n)$ is a 2-cover, $W\neq A_s$ for all independent sets $S$. Let $S=\{x_i\mid w_i=0\}$. Then $S$ is an independent set since if there was an edge $\{x_{i_1},...,x_{i_m}\}$ such that $\{x_{i_1},...,x_{i_m}\}\subset S$, then $w_{i_1}+...+w_{i_m}=0$, and this cannot happen since $W$ is a 2-cover.
We will show that $W-A_s$ is a 0-cover.
Let $x_j\in N(S)$. Then there is an edge $\{x_j,x_{i_2},...,x_{i_m}\}$ such that $\{x_{i_2},...,x_{i_m}\}\subset S$. Thus $w_{i_k}=0$ for $2\leq k \leq m$. Since $W$ is a 2-cover, $w_j=w_j+w_{i_2}+\cdots+w_{i_m}\geq 2$. Further, by our definition of $S$, $w_i\geq 1$ for all $x_i\notin S$. Thus $W-A_s\in \textbf{N}^n$. Also, since $W\neq A_s$, $W-A_s\neq \textbf{0}$. Since any nonzero element of $\textbf{N}^n$ is a 0-cover, we have $W-A_s$ is a 0-cover. $A_s$ is a 2-cover by part 1, so we have $W=A_s+(W-A_s)$ which proves the proposition.
\end{proof}

\begin{Theorem}\label{b}
Let $H$ be an m-hypergraph with edge ideal $I$. Then
$(I^\vee)^2$ has no non-minimal associated primes if and only if for every independent set $S\subset V$, $A_s$ is a sum of two 1-covers.
\end{Theorem}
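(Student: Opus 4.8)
The plan is to prove the statement through a chain of equivalences that converts the algebraic condition ``no non-minimal associated primes'' into the combinatorial condition on the vectors $A_s$, with the symbolic square $(I^\vee)^{(2)}$ serving as the bridge. The first ingredient I would establish is a clean divisibility fact: for any $0$-cover $W$, one has $\overline{W}\in (I^\vee)^2$ if and only if $W$ is a sum of two $1$-covers. Since $(I^\vee)^2$ is the monomial ideal generated by the $\overline{A}$ with $A$ a reducible $2$-cover, $\overline{W}\in(I^\vee)^2$ means $\overline{A}\mid\overline{W}$, i.e.\ $A\leq W$ componentwise, for some $A=B+C$ with $B,C$ being $1$-covers. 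Then $W-A\in\mathbf{N}^n$, so $W=(B+(W-A))+C$ exhibits $W$ as a sum of two $1$-covers because $B+(W-A)\geq B$ remains a $1$-cover. The converse is immediate by taking $A=W$.

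Next I would reduce the algebraic side to the equality $(I^\vee)^2=(I^\vee)^{(2)}$. Using the localization identity $(I^\vee)_{P_e}=(P_e)_{P_e}$ — valid because all edges have the same cardinality $m$, so $P_{e'}\subseteq P_e$ forces $e'=e$ — one gets $((I^\vee)^2)_{P_e}=((P_e)_{P_e})^2=(P_e^2)_{P_e}$, whence the minimal primary component of $(I^\vee)^2$ at each minimal prime $P_e=(x_{i_1},\dots,x_{i_m})$ is exactly $P_e^2$. Intersecting over the edges identifies $\bigcap_e P_e^2=(I^\vee)^{(2)}$ with the intersection of the minimal primary components of $(I^\vee)^2$. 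By the standard primary-decomposition fact that an ideal has no embedded primes precisely when it equals the intersection of its (uniquely determined) minimal primary components, this yields the pivotal equivalence: $(I^\vee)^2$ has no non-minimal associated primes if and only if $(I^\vee)^2=(I^\vee)^{(2)}$.

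It then remains to translate $(I^\vee)^2=(I^\vee)^{(2)}$ into combinatorics. Since $(I^\vee)^2\subseteq(I^\vee)^{(2)}$ always, and since $(I^\vee)^{(2)}$ is generated by the monomials of all $2$-covers by Lemma \ref{lm1} while $(I^\vee)^2$ is generated by those of the reducible $2$-covers, the divisibility fact above shows the two ideals coincide exactly when every $2$-cover is a sum of two $1$-covers. Proposition \ref{prop5} closes the gap between ``every $2$-cover'' and ``every $A_s$'': one direction is trivial since each $A_s$ is a $2$-cover by Proposition \ref{prop5}(1), and for the other I would invoke Proposition \ref{prop5}(2) to write an arbitrary $2$-cover $W$ as $A_{S_0}+Y$ with $Y$ a $0$-cover, so that a decomposition $A_{S_0}=B+C$ into $1$-covers gives $W=(B+Y)+C$, again a sum of two $1$-covers because $B+Y\geq B$.

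I expect the main obstacle to be the middle step: pinning down that the intersection of the minimal primary components of $(I^\vee)^2$ is precisely the combinatorially defined $(I^\vee)^{(2)}$. This is where the hypothesis that $H$ is an $m$-hypergraph with uniform edge size is essential, since it trivializes the containments among edge primes and makes the localization collapse cleanly to $P_e^2$; without uniformity a strictly smaller edge prime could contribute and the minimal component would differ from $P_e^2$. An alternative, more hands-on route for the forward implication would be to take an independent set $S$ with $A_s$ not a sum of two $1$-covers, set $u=\overline{A_s}\notin(I^\vee)^2$, and enlarge $u$ until the colon ideal $(I^\vee)^2:u$ becomes maximal and hence a prime $P$; there the difficulty instead shifts to verifying that $P$ is genuinely embedded rather than one of the $P_e$, which is the component-increment analysis foreshadowed by Theorem \ref{finder}.
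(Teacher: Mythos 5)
Your proposal is correct and follows essentially the same route as the paper's proof: both reduce the statement to the equality $(I^\vee)^2=(I^\vee)^{(2)}$ (via the primary decomposition in equation (\ref{*}), which you justify by localizing at the edge primes $P_e$), then translate that equality into ``every $2$-cover is a sum of two $1$-covers'' using Lemma \ref{lm1}, and finally bridge from arbitrary $2$-covers to the vectors $A_s$ with Proposition \ref{prop5} and the absorption trick $W=(B+Y)+C$. The only difference is presentational: you isolate as explicit lemmas the divisibility fact ($\overline{W}\in(I^\vee)^2$ iff $W$ is a sum of two $1$-covers) and the uniqueness of the minimal primary components, both of which the paper uses implicitly.
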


\begin{proof}
Let $H$ be an m-hypergraph with edge ideal $I$.

Suppose $(I^\vee)^2$ has no non-minimal associated primes.
Then by equation (\ref{*}),
$$
(I^\vee)^2=\bigcap_{\{x_{i_1},x_{i_2},...,x_{i_m}\}\in E}(x_{i_1},x_{i_2},...,x_{i_m})^2.
$$
But we also have
$$
(I^\vee)^{(2)}=\bigcap_{\{x_{i_1},x_{i_2},...,x_{i_m}\}\in E}(x_{i_1},x_{i_2},...,x_{i_m})^2,
$$
so $(I^\vee)^2=(I^\vee)^{(2)}$.
Further, we know that $(I^\vee)^2$ is generated by the 2-covers that are the sums of 1-covers and $(I^\vee)^{(2)}$ is generated by the 2-covers. Thus the 2-covers are generated by the 2-covers that are the sums of 1-covers. Thus if $\overline{A}$ is a monomial corresponding to a 2-cover, $\overline{A}=Q\overline{B}$ where $B$ is a 2-cover and $Q$ is another (non-zero) monomial. Thus we can write $A=Q\overline{B_1}\overline{B_2}$ where $B_1$ and $B_2$ are 1-covers. Further, $Q\overline{B_1}$ must also correspond to a 1-cover, so $A$ is the sum of two 1-covers.

Now suppose that for every independent set $S$, $A_s$ is a sum of two 1-covers. Let $B$ be a 2-cover. Then by proposition \ref{prop5}, $B=A_s$ for some independent set $S$, or $B=W+A_s$ for some 0-cover $W$ and some independent set $S$. Whichever is the case, we can say that $A_s=Q_1+Q_2$ for two 1-covers $Q_1$ and $Q_2$. So $B=Q_1+Q_2$ or $B=(W+Q_1)+Q_2$. Since $W+Q_1$ is a 1-cover as well, $B$ can be written as the sum of two 1-covers. Thus the set of 2-covers equals the set of 2-covers that can be written as the sum of two 1-covers.
Therefore, $(I^\vee)^2=(I^\vee)^{(2)}$. So
$$
(I^\vee)^2=\bigcap_{\{x_{i_1},x_{i_2},...,x_{i_m}\}\in E}(x_{i_1},x_{i_2},...,x_{i_m})^2.
$$

Since $(x_{i_1},x_{i_2},...,x_{i_m})^2$ is $(x_{i_1},x_{i_2},...,x_{i_m})$-primary, we have the irredundant primary decomposition, so the only associated primes of $(I^\vee)^2$ are the minimal primes.
\end{proof}

We also have the following result, which can be invaluable in detecting associated primes:
\begin{Theorem}\label{finder}
Let $H$ be an m-hypergraph of size $n$. Then an ideal $P\subset K[x_1,...,x_n]$ is a non-minimal associated prime of $(I(H)^\vee)^2$ if and only if $P=(x_{i_1},...,x_{i_q})$ and there exists a 2-cover $C$ that is not the sum of two 1-covers such that $x_{i_j}\overline{C}$ corresponds to a 2-cover that is the sum of two 1-covers for any $j\leq q$, and for any $W\notin (x_{i_1},...,x_{i_q})$, $W\overline{C}$ does not correspond to a 2-cover that is the sum of two 1-covers.
\end{Theorem}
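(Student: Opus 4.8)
The plan is to recast the three combinatorial conditions as the single statement $\big((I(H)^\vee)^2 : \overline{C}\big) = P$ and then invoke the standard colon-ideal description of associated primes of a monomial ideal. Write $R=K[x_1,\dots,x_n]$ and $J = (I(H)^\vee)^2$, and recall from the discussion following Lemma \ref{lm1} that a monomial $\overline{A}$ lies in $J$ if and only if the $0$-cover $A$ is a $2$-cover that is a sum of two $1$-covers (being a sum of two $1$-covers already forces the $2$-cover condition). Since $J$ is a monomial ideal, its associated primes are monomial primes, and a monomial prime $P$ is associated to $R/J$ if and only if $P = (J : f)$ for some monomial $f$. Now for a $2$-cover $C$ the condition ``$x_{i_j}\overline{C}$ is a $2$-cover that is a sum of two $1$-covers for every $j \le q$'' says exactly that each generator of $P=(x_{i_1},\dots,x_{i_q})$ lies in $(J:\overline C)$, i.e.\ $P \subseteq (J:\overline C)$, while the condition ``$W\overline{C}\notin J$ for every monomial $W\notin P$'' says exactly that no monomial outside $P$ lies in $(J:\overline C)$, i.e.\ $(J:\overline C)\subseteq P$ (the case $W=1$ also records $\overline C\notin J$, that is, $C$ is not a sum of two $1$-covers). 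Thus the stated hypotheses are equivalent to $(J:\overline C)=P$ with $C$ a $2$-cover and $\overline C\notin J$.

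For the ``if'' direction such a $C$ immediately makes $P$ an associated prime of $R/J$ by the colon characterization, so the point is to see that $P$ is \emph{non-minimal}. Here I would localize at $P$. If $P$ were a minimal prime it would be an edge prime $(x_{i_1},\dots,x_{i_m})$ of height $m$, and then $J_P = (x_{i_1},\dots,x_{i_m})^2R_P$, since no other associated prime of $J$ is contained in $P$ (minimal primes are incomparable and embedded primes have strictly larger height). But $C$ is a $2$-cover, so the exponents of $\overline C$ on the edge $\{x_{i_1},\dots,x_{i_m}\}$ sum to at least $2$; hence $\overline C \in (x_{i_1},\dots,x_{i_m})^2R_P$ and $(J_P:\overline C)=R_P\neq P_P$, contradicting $(J:\overline C)=P$. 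Therefore $P$ has height $q>m$ and is non-minimal.

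For the ``only if'' direction, let $P=(x_{i_1},\dots,x_{i_q})$ be a non-minimal associated prime; being embedded it strictly contains an edge prime, so $q>m$. By the colon characterization choose a monomial $\overline{A}$ with $(J:\overline A)=P$. The key step is to show that every edge $e'$ contained in the variables $\{x_{i_1},\dots,x_{i_q}\}$ generating $P$ already has $A$-weight at least $2$: since $q>m=|e'|$ there is a vertex $z\in\{x_{i_1},\dots,x_{i_q}\}\setminus e'$, and because $x_z\in P=(J:\overline A)$ the monomial $x_z\overline A$ lies in $J$, so its cover is a sum of two $1$-covers $B_1+B_2$; as $z\notin e'$ the entries of $A$ and of this cover agree on $e'$, whence $\sum_{x_j\in e'}a_j=\sum_{x_j\in e'}\big((B_1)_j+(B_2)_j\big)\ge 1+1=2$, the last inequality because $B_1,B_2$ are $1$-covers and $e'$ is an edge. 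I would then let $C$ be $A$ with every coordinate outside $\{x_{i_1},\dots,x_{i_q}\}$ raised to a large value; multiplying $\overline A$ by monomials in the variables outside $P$ leaves the colon equal to $P$ (those variables are nonzerodivisors modulo the prime $P$), while now every edge meeting the complement of $\{x_{i_1},\dots,x_{i_q}\}$ has weight at least $2$. Combined with the previous sentence, $C$ is a genuine $2$-cover with $(J:\overline C)=P$ and $\overline C\notin J$, which delivers the three conditions.

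The main obstacle is precisely this last construction: an arbitrary colon-witness $\overline A$ need not correspond to a $2$-cover, and a deficient edge lying inside the support of $P$ cannot be repaired by multiplication, since that would require a variable of $P$ and would collapse the colon. The observation that rescues the argument is that non-minimality ($q>m$) guarantees a spare generator of $P$ outside each such edge, and the socle relation at that vertex forces the edge weight up to $2$ for free. The remaining ingredients -- the colon description of associated primes of a monomial ideal and the localization computation establishing minimality -- are standard.
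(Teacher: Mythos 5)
Your proof is correct, and its backbone --- recasting the three combinatorial conditions as $\bigl((I(H)^\vee)^2:\overline{C}\bigr)=P$ and using a spare generator of $P$ lying outside a given edge --- is the same as the paper's. The two arguments diverge exactly at the spot you flag as the ``main obstacle,'' and there the paper is slicker: it observes that the spare-generator trick applies to \emph{every} edge, not only edges contained in $\{x_{i_1},\dots,x_{i_q}\}$. Since $P$ is non-minimal, $q\geq m+1$, so any edge $e$ of $H$ (wherever its vertices lie) omits some generator $x_k$ of $P$; the cover of $x_kZ$ is a sum of two $1$-covers, hence a $2$-cover, and it agrees with the cover of $Z$ on all of $e$ because $x_k\notin e$, forcing the weight of $e$ under $Z$ to be at least $2$. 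So an arbitrary colon witness $Z$ \emph{does} already correspond to a $2$-cover --- your worry that it ``need not'' is unfounded --- and the paper requires no modification of the witness. Your alternative route, multiplying $\overline{A}$ by large powers of the variables outside $P$ and noting $\bigl(J:W\overline{A}\bigr)=(P:W)=P$ by primality of $P$, is nonetheless valid; it just trades the uniform spare-generator observation for an extra colon computation. In the ``if'' direction you actually prove more than the paper does: the paper stops at showing $P=\bigl((I(H)^\vee)^2:\overline{C}\bigr)$ is an associated prime and never verifies non-minimality, whereas your localization at an edge prime --- where $(I^\vee)^2$ localizes to $(x_{i_1},\dots,x_{i_m})^2R_P$ by the primary decomposition, and $\overline{C}$ lies in this localization because $C$ is a $2$-cover --- rules out $P$ being minimal. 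That step closes a small gap the paper leaves implicit, so on balance your write-up is a sound variant that is slightly more roundabout in one direction and slightly more complete in the other.
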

\begin{proof}
Suppose the ideal $P\subset K[x_1,...,x_n]$ is a non-minimal associated prime of $(I(H)^\vee)^2$. Since $(I(H)^\vee)^2$ is a monomial ideal, $P=(x_{i_1},...,x_{i_q})$. Then there exists some monomial $Z$ such that $((I(H)^\vee)^2:Z)=P$. Since $Z$ is a monomial, there exists some 0-cover $C=(c_1,...,c_n)$ such that $Z=\overline{C}$. Note that $x_{i_j}Z\in(I(H)^\vee)^2$ for all $j\leq q$. Since $(I(H)^\vee)^2$ is generated by the monomials corresponding to 2-covers that are the sum of two 1-covers, $x_{i_j}Z$ corresponds to a 2-cover that is the sum of two 1-covers for all $j$. Further, since $Z$ cannot be in $(I(H)^\vee)^2$, $C$ cannot be the sum of two 1-covers. Since $P$ is non-minimal, $P$ has height at least $m+1$. Let $R=(r_1,...,r_n)$ be the 0-cover such that $\overline{R}=x_{i_1}Z$. Then $R$ is the sum of two 1-covers. Suppose that $A=\{x_{a_1},...,x_{a_m}\}$ is an edge such that $x_{i_1}\notin A$. Then we have $r_{a_1}+\cdots+r_{a_m}\geq 2$. But for $i\neq {i_1}$, $r_i=c_i$. Thus $c_{a_1}+\cdots+c_{a_m}\geq 2$. Suppose that $B=\{x_{b_1},...,x_{b_m}\}$ is an edge where $b_1=i_1$. Then, since $P$ has height greater than $m$, there exists $k$ such that $k=i_j$ for some $j$ and $k\neq b_j$ for all $j$. Let $S=(s_1,...,s_n)$ be the 0-cover such that $\overline{S}=x_{k}Z$. Then $S$ is the sum of two 1-covers. Thus $s_{b_1}+\cdots+s_{b_m}\geq 2$. But $s_i=c_i$ for all $i\neq k$. Since $k\neq b_j$ for all $j$, $c_{b_1}+\cdots+c_{b_m}\geq 2$. Thus $C$ is a 2-cover. Let $W\notin P$. Then we have $WZ\notin (I(H)^\vee)^2$. Thus $W\overline{C}=WZ$ does not correspond to a 2-cover that is the sum of two 1-covers.

Now suppose that $C=(c_1,...,c_n)$ is a 2-cover that is not the sum of two 1-covers, and that $x_{i_j}\overline{C}$ corresponds to a 2-cover that is the sum of two 1-covers for all $j\leq q$ and for any $W\notin (x_{i_1},...,x_{i_q})$, $W\overline{C}$ does not correspond to a 2-cover that is the sum of two 1-covers.  Since $(I(H)^\vee)^2$ is generated by the monomials that correspond to 2-covers that are the sum of two 1-covers, $\overline{C}\notin (I(H)^\vee)^2$ and $x_{i_j}\overline{C}\in (I(H)^\vee)^2$ for $j\leq q$. Thus $(x_{i_1},...,x_{i_q})\subset ((I(H)^\vee)^2:\overline{C})$. Further, we have for $W\notin (x_{i_1},...,x_{i_q})$, $W\overline{C}\notin (I(H)^\vee)^2$. Thus $(x_{i_1},...,x_{i_q}) = ((I(H)^\vee)^2:\overline{C})$. Since $(x_{i_1},...,x_{i_q})$ is a prime ideal, $(x_{i_1},...,x_{i_q})$ is an associated prime of $(I(H)^\vee)^2$.
\end{proof}
The incidence matrix of a hypergraph with $v$ vertices and $e$ edges is the $v\times e$ matrix with $a_{ij}=1$ if $x_i$ is a vertex of the $j$th edge, and $a_{ij}=0$ otherwise.
If there does not exist a square submatrix with an odd number of columns such that each row and column has exactly 2 ones, then the hypergraph is called balanced.
It is shown in Proposition 4.11 of \cite{GRV} that a balanced hypergraph has no non-minimal associated primes. In the case of a 2-hypergraph (a simple graph), balanced is equivalent to bipartite.

This condition is sufficient to show that a hypergraph has no non-minimal associated primes, but it is not necessary, as shown by this example, of
the 3-hypergraph given by the edge ideal $I=(x_1x_2x_3,x_3x_4x_5,x_5x_6x_1,x_2x_3x_4)$, which  is not balanced, but has no non-minimal associated primes.

In the case of 2-hypergraphs, there is a simple necessary and sufficient condition for $(I^\vee)^2$ to have no nonminimal associated primes, which we state in Theorem \ref{c} below, with a very simple and direct proof. Theorem \ref{c} is also a consequence of facts proven in Theorem 5.1 of \cite{HHT}.

A 2-hypergraph with vertex set $V$ and edge set $E$ is called bipartite if there exist two subsets $A\subset V$, $B\subset V$ such that $V=A\cup B$, $A\cap B=\emptyset$ and $\{x_i,x_j\}\in E$ implies either $x_i\in A$, $x_j\in B$ or $x_i\in B$, $x_j\in A$.
\begin{Theorem}\label{c}
Suppose $I$ is the edge ideal of a 2-hypergraph $G$ (a simple graph). Then the following are equivalent:
\begin{enumerate}
\item [1.] $G$ is bipartite

\item [2.] The 2-cover $(1,1,....,1)$ is a sum of two 1-covers

\item [3.] $(I^\vee)^2$ has no non-minimal associated primes.
\end{enumerate}
\end{Theorem}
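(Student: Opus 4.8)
The plan is to prove the three-way equivalence by establishing $1 \Leftrightarrow 2$ directly, then $3 \Rightarrow 2$ and $1 \Rightarrow 3$, which together close the loop; the only real content lies in $1 \Rightarrow 3$, and everything rests on Theorem \ref{b}. I would begin with the purely combinatorial equivalence $1 \Leftrightarrow 2$. For $1 \Rightarrow 2$, given a bipartition $V = A \sqcup B$, I take the indicator vectors $\chi_A$ and $\chi_B$: since every edge meets both classes, each is a $1$-cover, and since $A,B$ partition $V$ we get $\chi_A + \chi_B = (1,\dots,1)$. For the converse $2 \Rightarrow 1$, I write $(1,\dots,1) = B + C$ with $B,C$ $1$-covers; because the coordinates are nonnegative integers summing to $1$ at every vertex, each vertex carries a $1$ in exactly one of $B,C$, so their supports partition $V$. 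The defining inequalities $b_i + b_j \geq 1$ and $c_i + c_j \geq 1$ on each edge then say exactly that no edge lies inside a single support class, which is bipartiteness.

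Next I would dispatch $3 \Rightarrow 2$ using Theorem \ref{b}. The observation is that the empty set is an independent set and its neighborhood is empty, so $A_\emptyset = (1,\dots,1)$. Hence if $(I^\vee)^2$ has no non-minimal associated primes, then by Theorem \ref{b} every $A_s$, and in particular $A_\emptyset = (1,\dots,1)$, is a sum of two $1$-covers, which is statement $2$.

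The substantive step is $1 \Rightarrow 3$, where I again invoke Theorem \ref{b}: it suffices to show that for \emph{every} independent set $S$ the $2$-cover $A_s$ is a sum of two $1$-covers. Writing $T = N(S)$ and $R = V \setminus (S \cup T)$, and using the bipartition $V = A \sqcup B$ supplied by statement $1$, I define vectors $P$ and $Q$ by setting both to $0$ on $S$ and to $1$ on $T$, and on $R$ splitting the value $1$ along the bipartition: $P=1,\ Q=0$ on $R \cap A$ and $P=0,\ Q=1$ on $R \cap B$. By construction $P+Q = A_s$. To check that $P$ is a $1$-cover I examine its zero set, $S \cup (R \cap B)$, and verify that no edge has both endpoints there: an edge inside $S$ is forbidden by independence; an edge touching $S$ must send its other endpoint into $T$ (not $R$) by the definition of $N(S)$; and an edge inside $R \cap B \subseteq B$ is forbidden by bipartiteness. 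The verification that $Q$ is a $1$-cover is symmetric with $A$ and $B$ interchanged.

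The main obstacle is precisely this decomposition in $1 \Rightarrow 3$: one must split $A_s$ on $R$ so that both summands avoid having an edge inside their respective zero sets, and the bipartition is exactly the structure that makes such a consistent choice possible. The care is concentrated in the case analysis over the three edge types (inside $S$, from $S$ into $T$, and entirely within $V \setminus S$), but once the definition of the neighborhood and bipartiteness are brought to bear, each case closes immediately, and Theorem \ref{b} converts the conclusion into the absence of non-minimal associated primes.
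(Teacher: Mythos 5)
Your proposal is correct and follows essentially the same route as the paper: the cycle $2 \Rightarrow 1$ via supports of the two summands, $1 \Rightarrow 3$ via Theorem \ref{b} with exactly the same decomposition of $A_s$ (zero on $S$, one on $N(S)$, and the value on $V - S - N(S)$ split along the bipartition), and $3 \Rightarrow 2$ via $A_\emptyset = (1,\dots,1)$. Your added direct implication $1 \Rightarrow 2$ is logically redundant given the cycle, and your zero-set phrasing of the edge case analysis is if anything slightly cleaner than the paper's, but the substance is identical.
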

\begin{proof}
($2\Rightarrow 1$)
Suppose $G$ is a 2-hypergraph of size $n$.
Suppose $(1,1,...,1)$ is the sum of two 1-covers, $Q$ and $P$. Then each component of $Q$ and $P$ must be either $1$ or $0$. Further, if $q_i=0$, $p_i=1$ and vice versa.
Let $A=\{x_i\mid q_i=1\}$ and $B=\{x_i\mid p_i=1\}$. Then we have $V=A\cup B$ and $A\cap B=\emptyset$. Further, since $Q$ and $P$ are both 1-covers, each edge must have at least one vertex in $A$ and $B$. Thus $G$ is bipartite.

($1\Rightarrow 3$)
Suppose $G$ is bipartite. Let $A\subset V$, $B\subset V$ such that $V=A\cup B$, $A\cap B=\emptyset$ and $\{x_i,x_j\}\in E$ implies either $x_i\in A$, $x_j\in B$ or $x_i\in B$, $x_j\in A$.
Let $S$ be an independent set in $G$. Let $A'=(a_1,...,a_n)$ where $a_i=0$ if $x_i\in S$, $a_i=1$ if $x_i\in N(S)$, $a_i=1$ if $x_i\in (V-S-N(S))\cap A$, and $a_i=0$ if $x_i\notin (V-S-N(S))\cap A$. Similarily, let $B'=(b_1,...,b_n)$ where $b_i=0$ if $x_i\in S$, $b_i=1$ if $x_i\in N(S)$, $b_i=1$ if $x_i\in (V-S-N(S))\cap B$, and $b_i=0$ if $x_i\notin (V-S-N(S))\cap B$. Then $A+B=A_s$.
Let $\{x_i,x_j\}\in E$. Then $\{x_i,x_j\}$ is not a subset of $S$. Suppose one of $x_i,x_j$ is in $S$. Then the other is in $N(S)$, so $a_i+a_j=b_i+b_j=1$. Now suppose $\{x_i,x_j\}\cap S=\emptyset$. Then $\{x_i,x_j\}\subset V-S-N(S)$, and one of $x_i,x_j$ is in $A$ and the other is in $B$, so we have either $x_i\in (V-S-N(S))\cap A$, $x_j\in(V-S-N(S))\cap B$ or vice versa. Either way, $a_i+a_j=b_i+b_j=1$. Thus $A'$ and $B'$ are both 1-covers, so each $A_s$ is a sum of two 1-covers and by Theorem \ref{b}, $(I^\vee)^2$ has no non-minimal associated primes.

($3\Rightarrow 2$)
Suppose $(I^\vee)^2$ has no non-minimal associated primes. Then by theorem \ref{b}, $A_{\emptyset}=(1,1,1,...,1)$ is the sum of two 1-covers.
\end{proof}

Theorem \ref{b} gives a generalization of Theorem \ref{c} to m-hypergraphs.
However, the combinatorial conditions on Theorem \ref{b} are much more complicated on hypergraphs. For example, the 2-cover $(1,1,...,1)$ can be the sum of two 1-covers when there are non-minimal associated primes. An example of an edge ideal for such a situation is
$$
I=(x_1x_2x_3,x_3x_4x_5,x_5x_6x_1).
$$
In fact, in m-hypergraphs it is difficult to find an example for which $(1,1,...,1)$ is not the sum of two 1-covers. However, the complete 3-hypergraph on 10 vertices has this property (A complete m-hypergraph is an m-hypergraph for which every collection of $m$ vertices is connected by an edge).

An induced subhypergraph on a subset of the vertex set of an m-hypergraph is the m-hypergraph formed by the subset of the vertex set and the subset of the edge set consisting of all edges for which all the vertices on the edge are in the subset of the vertex set.

A theorem that provides some insight into the structure of associated primes of $(I^\vee)^2$ is:
\begin{Theorem}\label{indt}
Let $H$ be an m-hypergraph of size $n$. Then the ideal $(x_{i_1},...,x_{i_q})\subset K[x_1,...,x_n]$ is an associated prime of $(I(H)^\vee)^2$ if and only if the ideal $(x_{i_1},...,x_{i_q})\subset K[x_{i_1},...,x_{i_q}]$ is an associated prime of $(I(H')^\vee)^2$ where $H'$ is the induced subhypergraph on $x_{i_1},...,x_{i_q}$.

\end{Theorem}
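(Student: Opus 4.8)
The plan is to reduce the whole statement to a single extension--restriction principle for covers, and then to feed it into the combinatorial criterion of Theorem \ref{finder}. Throughout write $S=\{i_1,\dots,i_q\}$, let $H'$ be the induced subhypergraph on $\{x_{i_1},\dots,x_{i_q}\}$, and for $C\in\mathbf{N}^n$ let $C|_S\in\mathbf{N}^q$ be its restriction to the coordinates in $S$. The engine of the argument will be the following claim: \emph{if $C\in\mathbf{N}^n$ has $c_k\ge 2$ for every $k\notin S$, then (a) $C$ is a $2$-cover of $H$ if and only if $C|_S$ is a $2$-cover of $H'$, and (b) $C$ is a sum of two $1$-covers of $H$ if and only if $C|_S$ is a sum of two $1$-covers of $H'$.} The forward implications I would get simply by restricting a cover of $H$ to $S$ and using that the edges of $H'$ are exactly the edges of $H$ contained in $S$; the reverse implications I would get by extending covers of $H'$, giving each outside coordinate enough value (at least $1$ in each summand, which is available precisely because $c_k\ge 2$) so that every edge of $H$ that meets the complement of $S$ is automatically covered.

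Before using this I would dispose of the minimal primes. By the localization remark preceding (\ref{*}), and since a non-minimal associated prime has height at least $m+1$ (as invoked in the proof of Theorem \ref{finder}), the associated primes of height $m$ are exactly the $(x_{i_1},\dots,x_{i_m})$ with $\{x_{i_1},\dots,x_{i_m}\}\in E$. As $H'$ is induced on precisely those $m$ vertices, such a set is an edge of $H$ if and only if it is an edge of $H'$, which settles the case $q=m$ on both sides at once. Hence I may assume $q\ge m+1$, so that on both sides we deal with non-minimal primes and Theorem \ref{finder} applies.

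Here I would record the key observation that in $K[x_{i_1},\dots,x_{i_q}]$ the ideal $(x_{i_1},\dots,x_{i_q})$ is the maximal ideal, so the only monomial $W\notin(x_{i_1},\dots,x_{i_q})$ is $W=1$; thus the last clause of Theorem \ref{finder} is vacuous for $H'$, and to certify $(x_{i_1},\dots,x_{i_q})$ for $H'$ it is enough to produce a $2$-cover $C'$ of $H'$ that is not a sum of two $1$-covers but becomes one after adding any $e_{i_j}$. For the implication $(\Leftarrow)$ I would start from such a $C'$ and let $C$ agree with $C'$ on $S$ and equal $2$ elsewhere. The claim converts each property of $C'$ into the matching property of $C$ (noting that $C+e_{i_j}$ again has all outside coordinates equal to $2$), while the outside clause of Theorem \ref{finder} for $H$ is handled because any $W\notin(x_{i_1},\dots,x_{i_q})$ sends $\overline C$ to $\overline{C+D}$ with $D\ge 0$ supported off $S$, whose restriction to $S$ is still $C'$, so by the claim it is not a sum of two $1$-covers; thus $C$ witnesses $(x_{i_1},\dots,x_{i_q})$ for $(I(H)^\vee)^2$. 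For $(\Rightarrow)$ I would take a Finder-witness $C$ for $H$, first \emph{normalize} it by replacing each outside coordinate $c_k$ with $\max(c_k,2)$: this only adds a nonnegative vector supported off $S$, so by the outside clause of Theorem \ref{finder} the result is still not a sum of two $1$-covers, and the remaining Finder conditions persist because adding a nonnegative outside vector to a $1$-cover keeps it a $1$-cover. The normalized witness has all outside coordinates $\ge 2$, so setting $C'=C|_S$ and applying the claim (together with plain restriction for the $+e_{i_j}$ condition) yields exactly the two properties needed above.

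The main obstacle is entirely in the $(\Rightarrow)$ direction, at the single point where one needs the restriction $C'$ to \emph{fail} to be a sum of two $1$-covers of $H'$: a general Finder-witness for $H$ may carry outside coordinates equal to $0$ or $1$, and for such a $C$ the restriction can split even when $C$ itself does not. The normalization step is precisely what repairs this, for once every outside coordinate is at least $2$ the claim applies in its contrapositive form: any splitting of $C'$ could be lifted back to a splitting of $C$ using the spare outside budget, contradicting that $C$ is not a sum of two $1$-covers. All the other verifications are routine once the claim is in place, so I would expect the write-up to consist of proving the claim carefully and then invoking it mechanically in both directions.
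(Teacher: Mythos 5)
Your proposal is correct and is essentially the paper's own argument: the paper likewise applies the criterion of Theorem \ref{finder} in both directions, extending a witness of $H'$ by placing $2$ in every coordinate outside $\{x_{i_1},\dots,x_{i_q}\}$, and conversely restricting a witness of $H$ to those coordinates, with your normalization step $c_k\mapsto\max(c_k,2)$ playing exactly the role of the paper's multiplication by $x_{z_1}^2\cdots x_{z_r}^2$ (justified, as in your write-up, by the clause that adding anything supported off $S$ cannot create a sum of two $1$-covers). Your only organizational difference is to isolate the extension--restriction equivalences as a single stated claim rather than verifying them inline, which does not change the substance of the proof.
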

\begin{proof}
Suppose $(x_{i_1},...,x_{i_q})\subset K[x_{i_1},...,x_{i_q}]$ is an associated prime of $(I(H')^\vee)^2$ where $H'$ is the induced subgraph on $x_{i_1},...,x_{i_q}$. Suppose $q=m$. Then $(x_{i_1},...,x_{i_q})$ is a minimal prime, and $\{x_{i_1},...,x_{i_q}\}$ is an edge in both $H$ and $H'$, so $(x_{i_1},...,x_{i_q})$ is an associated prime in $(I(H)^\vee)^2$. Now suppose $q>m$. Then there exists a 2-cover of $H'$, $C=(c_{i_1},...,c_{i_q})$, such that $C$ is not the sum of two 1-covers, but if we add 1 to the ${i_k}$th component of $C$ for any $k$, we obtain a 2-cover that is the sum of two 1-covers. Let $T=(t_1,...,t_n)$ be such that $t_{i_j}=c_{i_j}$ and $t_l=2$ whenever $l\neq i_j$ for all $j\leq q$. Let $\{x_{p_1},...,x_{p_m}\}$ be an edge in $H$. Suppose $\{x_{p_1},...,x_{p_m}\}$ is also an edge in $H'$. Then $t_{p_1}+\cdots+t_{p_m}=c_{p_1}+\cdots+c_{p_m}\geq 2$ since $C$ is a 2-cover. Suppose at least one of $x_{p_1},...,x_{p_m}$, say $x_{p_j}$, is not in $H'$. Then $t_{p_j}=2$, so $t_{p_1}+\cdots+t_{p_m}\geq 2$. Thus $T$ is a 2-cover of $H$. Suppose $T=D+E$ where $D=(d_1,...,d_n)$ and $E=(e_1,...,e_n)$ are 1-covers of $H$. Then for all edges $\{x_{p_1},...,x_{p_m}\}$ in $H$, $d_{p_1}+\cdots+d_{p_m}\geq 1$ and $e_{p_1}+\cdots+e_{p_m}\geq 1$. Specifically, $D'=(d_{i_1},...,d_{i_q})$ and $E'=(e_{i_1},...,e_{i_q})$ are 1-covers in $H'$. Further, $E'+D'=C$, which cannot happen since $C$ is not the sum of two 1-covers. Thus $T$ is not the sum of two 1-covers. Now we must show that if we add 1 to the ${i_k}$th component of $T$ for any $k$, we obtain a 2-cover that is the sum of two 1-covers, and that if we add 1 to any other component, we do not. Let $F=(f_1,...,f_n)$ where $f_{i_k}=1$ for some $k$, and $f_i=0$ everywhere else. Let $F'=(f_{i_1},...,f_{i_q})$. Then there exist two 1-covers of $H'$, $A'=(a'_{i_1},...,a'_{i_q})$ and $B'=(b'_{i_1},...,b'_{i_q})$ such that $A'+B'=F'+C$. Let $A=(a_1,...,a_n)$, $B=(b_1,...,b_n)$ where $a_{i_j}=a'_{i_j}$, $b_{i_j}=b'_{i_j}$ and $a_i=b_i=1$ everywhere else. Then we have $A$ and $B$ are 1-covers on $H$ and $A+B=F+T$. Now let $F$ be any 0-cover such that $f_{i_k}=0$ for all $k$. Then $F'=(0,...,0)$, and so the same argument that shows that $T$ is not the sum of two 1-covers shows that $F+T$ is not the sum of two 1-covers. Thus the ideal $(x_{i_1},...,x_{i_q})\subset K[x_1,...,x_n]$ is an associated prime of $(I(H)^\vee)^2$.

Now suppose the ideal $(x_{i_1},...,x_{i_q})\subset K[x_1,...,x_n]$ is an associated prime of $(I(H)^\vee)^2$. Suppose $q=m$. Then we $(x_{i_1},...,x_{i_q})$ is minimal and so is an associated prime of $(I(H')^\vee)^2$. Now suppose $q>m$. Then there exists a 2-cover $C=(c_1,...,c_n)$ such that $C$ is not the sum of two 1-covers, but if we add 1 to the ${i_k}$th component of $C$ for any $k$, we obtain a 2-cover that is the sum of two 1-covers, and that if we add anything to any other components, we do not. Let $H'$ be the induced subhypergraph on $\{x_{i_1},...,x_{i_q}\}$. Let $C'=(c'_{i_1},...,c'_{i_q})$ where $c'_{i_j}=c_{i_j}$. Let $\{x_{p_1},...,x_{p_m}\}$ be an edge in $H'$. Then $\{x_{p_1},...,x_{p_m}\}$ is also an edge in $H$. Thus $c_{p_1}+\cdots+c_{p_m}=c'_{p_1}+\cdots+c'_{p_m}\geq 2$. Thus $C'$ is a 2-cover of $H'$. Suppose $C'$ is the sum of two 1-covers. We have $C'=A'+B'$ where $A'=(a'_{i_1},...,a'_{i_q})$, $B'=(b'_{i_1},...,b'_{i_q})$ are 1-covers of $H'$. 
Let $\{x_{z_1},...,x_{z_r}\}$ be the set of vertices not in $H'$. Let $W=(w_1,...,w_n)$ be a 0-cover of $H$ such that $\overline{W}=x_{z_1}^2\cdots x_{z_r}^2\overline{C}$. 
Then, for any vertex $x_j$ not in $H'$, $w_j\geq2$. Let $F=(f_1,...,f_n)$ be such that $f_i=0$ if $x_i$ is in $H'$, $f_i=1$ otherwise. Let $G=(g_1,...,g_n)$ such that $g_i=0$ if $x_i$ is in $H'$, $g_i=w_i-f_i$ otherwise. Then we have for each $x_i$ not in $H'$, $g_i\geq f_i=1$.

Let $A=(a_1,...,a_n)$ where $a_{i_j}=a'_{i_j}$ for $1\leq j \leq q$, $a_k=0$ everywhere else. Similarly, let $B=(b_1,...,b_n)$ where $b_{i_j}=b'_{i_j}$ for $1\leq j\leq q$, $b_k=0$ everywhere else. Then $W=(A+F)+(B+G)$. For each edge $\{x_{p_1},...,x_{p_m}\}$ not in $H'$, $g_{p_1}+\cdots+g_{p_m}\geq f_{p_1}+\cdots+f_{p_m}\geq1$, and for each edge $\{x_{p_1},...,x_{p_m}\}$ in $H'$, $a_{p_1}+\cdots+a_{p_m}\geq 1$ and $b_{p_1}+\cdots+b_{p_m}\geq 1$. Thus $W$ is the sum of two 1-covers. However, this is contrary to the initial supposition about $C$, namely that if we add anything to components outside $H'$, we do not obtain a sum of two 1-covers. Thus $C'$ is not the sum of two 1-covers. Now we must show that for any $x_{i_j}$, $x_{i_j}\overline{C'}$ corresponds to a sum of two 1-covers. By our initial supposition, $x_{i_j}\overline{C}=\overline{E}\overline{T}$ where $E=(e_1,...,e_n)$ and $T=(t_1,...,t_n)$ are 1-covers. Let $E'=(e_{i_1},...,e_{i_q})$ and $T'=(t_{i_1},...,t_{i_q})$. Then $T'$ and $E'$ are 1-covers in $H'$ Further, $\overline{T'}\overline{E'}=x_{i_j}\overline{C'}$. Thus we have $(x_{i_1},...,x_{i_j})$ is an associated prime of $(I(H')^\vee)^2.$
\end{proof}

\section{m-hypergraphs}

In this section we investigate the non-minimal associated primes of $(I^\vee)^2$ in terms of properties of the hypergraph. The motivation is the following theorem for 2-hypergraphs (simple graphs) proven in \cite{FHT}. 

Define a cycle as an ordered set $C = \{x_{i_1},x_{i_2},...,x_{i_n}\}\subseteq V$ such that for $1\leq j < n$, $x_{i_j}$ and $x_{i_{j+1}}$ are connected by an edge on the induced subhypergraph, and $x_{i_1}$ and $x_{i_n}$ are connected by an edge. The size of the cycle is the size of $C$.

A cycle $C$ is called induced if $C$ is not a cycle in the graph formed by the removal of any edge from the induced subgraph on $C$. If a cycle is not induced, it is called non-induced.
The theorem is:
\begin{Theorem}\label{d} (Theorem 1.1 \cite{FHT})
Let $G$ be a simple graph (a 2-hypergraph). A prime ideal $P=(x_{i_1},...,x_{i_r})$ is an associated prime of $(I(G)^\vee)^2$ if and only if:
\begin{enumerate}
\item [1.]$P=(x_{i_1},x_{i_2})$ and $\{x_{i_1},x_{i_2}\}$ is an edge in $G$, or
\item [2.]$r$ is odd and the set $\{x_{i_1},...,x_{i_r}\}$ is an induced cycle in $G$ for some ordering.
\end{enumerate}
\end{Theorem}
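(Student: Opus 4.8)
The plan is to deduce Theorem~\ref{d} from the combinatorial machinery already developed, reducing everything to the behavior of the maximal ideal on an induced subgraph. First I would observe that condition (1) is exactly the statement that $P$ is a minimal prime of $(I(G)^\vee)^2$, which the excerpt already identifies with the edges of $G$; so it remains to treat non-minimal (embedded) primes and show they are precisely the odd induced cycles. For a prime $P=(x_{i_1},\dots,x_{i_r})$, Theorem~\ref{indt} lets me replace $G$ by the induced subgraph $G'$ on $\{x_{i_1},\dots,x_{i_r}\}$ and ask instead whether the \emph{maximal} ideal $\mathfrak m=(x_{i_1},\dots,x_{i_r})$ is a non-minimal associated prime of $(I(G')^\vee)^2$. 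Thus the whole theorem reduces to the single clean assertion: \emph{for a simple graph $G'$ on its full vertex set with $r\ge 3$, $\mathfrak m$ is associated to $(I(G')^\vee)^2$ if and only if $G'$ is an odd cycle.} Note that for $P=\mathfrak m$ the last hypothesis of Theorem~\ref{finder} is automatic, since the only monomials outside $\mathfrak m$ are constants; so Theorem~\ref{finder} says $\mathfrak m$ is associated iff there is a witness $2$-cover $C$ which is not a sum of two $1$-covers but for which $C+e_j$ is a sum of two $1$-covers for every vertex $j$.

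For the construction direction (odd cycle $\Rightarrow$ associated) I would take $C=(1,1,\dots,1)$ as the witness. It is a $2$-cover since each edge sums to $2$. It is not a sum of two $1$-covers because an odd cycle is not bipartite, by the equivalence (1)$\Leftrightarrow$(2) of Theorem~\ref{c}. Finally, for each vertex $j$, $C+e_j$ is a sum of two $1$-covers: deleting $j$ from the cycle leaves a path, which I can properly $2$-color into classes $A$ and $B$; setting $P=\mathbf 1_{\{j\}\cup A}$ and $Q=\mathbf 1_{\{j\}\cup B}$ gives two $1$-covers (each cycle edge has one endpoint in $A$ and one in $B$, and the two edges at $j$ are covered by $j$ itself) with $P+Q=C+e_j$. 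By Theorem~\ref{finder}, $\mathfrak m$ is associated, and then Theorem~\ref{indt} promotes this to: every odd induced cycle of $G$ yields an associated prime, which is case (2).

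For the converse I would start from a witness $C$ and extract structure of $G'$. First, $G'$ is connected: if $G'=G_1\sqcup G_2$, then being a sum of two $1$-covers is a coordinatewise property across the two parts, so $C$ failing to be such a sum means its restriction to some part, say $G_1$, already fails; but then adding $e_j$ at a vertex $j$ of $G_2$ leaves the $G_1$-part of $C$ unchanged and still non-decomposable, contradicting that $C+e_j$ is a sum of two $1$-covers. Next, $G'$ has no vertex of degree $\le 1$: for a pendant (or isolated) vertex $v$ one checks, using only that $C$ is a $2$-cover, that whenever $C+e_v$ is a sum of two $1$-covers one can redistribute a single unit at $v$ (and, if forced, shift one unit between the two covers at the unique neighbor, whose $C$-value is then $\ge 2$) to exhibit $C$ itself as a sum of two $1$-covers --- contradicting the witness property. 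Finally, $G'$ is non-bipartite by Theorem~\ref{c} (a bipartite graph has no non-minimal associated prime at all). So $G'$ is connected, non-bipartite, with minimum degree at least $2$.

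The remaining and hardest step is to upgrade ``minimum degree $\ge 2$'' to ``$2$-regular,'' equivalently to show the witness may be normalized to the all-ones vector. The mechanism I would use: in any decomposition $C=P+Q$ every \emph{tight} edge (one with $c_i+c_j=2$) is forced to split as $p_i+p_j=q_i+q_j=1$, so the obstruction to $C$ being a sum of two $1$-covers is an odd cycle inside the subgraph of tight edges, while the witness requirement that $C+e_j$ repairs $C$ for \emph{every} $j$ forces every vertex to be essential to this obstruction; pushing this through should force all $c_i=1$, no slack (non-tight) edges, and hence $G'$ $2$-regular. Granting $C=\mathbf 1=(1,\dots,1)$, the witness conditions become simply that $G'$ is non-bipartite while $G'-j$ is bipartite for every $j$ (since, as in the construction direction, $\mathbf 1+e_j$ decomposes exactly when the graph $G'-j$ is bipartite). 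A short graph-theoretic lemma then finishes: take a shortest odd cycle $O$; if some vertex lies off $O$, then $G'$ minus that vertex still contains $O$ and is non-bipartite, impossible, so $O$ is spanning; and any chord of $O$ would create a strictly shorter odd cycle, contradicting minimality, so $O$ has no chords and $G'=O$, an odd cycle. I expect this normalization-to-all-ones (equivalently the exclusion of vertices of degree $\ge 3$ and of coordinates $\ge 2$ in a minimal witness) to be the main obstacle, since it is exactly the place where the $2$-hypergraph rigidity --- absent in the higher $m$ studied later --- must be exploited.
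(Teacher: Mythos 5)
First, a point of comparison: the paper itself offers no proof of this statement --- Theorem \ref{d} is quoted as Theorem 1.1 of \cite{FHT} and used as motivation, so there is no internal argument to match your attempt against; it must be judged on its own. Much of it holds up. Identifying case (1) with the minimal primes, invoking Theorem \ref{indt} to reduce to the maximal ideal $\mathfrak m$ on the induced subgraph $G'$, and observing that the final hypothesis of Theorem \ref{finder} is vacuous when $P=\mathfrak m$ are all correct. The forward direction (odd cycle $\Rightarrow$ $\mathfrak m$ associated) is complete: $C=(1,\dots,1)$ fails to decompose by the contrapositive of $(2\Rightarrow 1)$ in Theorem \ref{c}, and your path-coloring decomposition of $C+e_j$ is right. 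In the converse, the connectivity argument and the exclusion of vertices of degree $\le 1$ also check out (in the pendant case, if neither $1$-cover can absorb the decrement at $v$ one gets $c_u+c_v\le 1$, contradicting the $2$-cover condition, and otherwise $p_u\ge 2$ permits the unit shift at the neighbor $u$).

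The genuine gap is exactly where you flag it: passing from ``connected, non-bipartite, minimum degree $\ge 2$'' to ``$G'$ is a cycle,'' equivalently normalizing the witness to $C=\mathbf{1}$. The ``tight edge'' mechanism you sketch is not a proof. The constraint $p_i+p_j=q_i+q_j=1$ on tight edges does not directly produce a $2$-coloring obstruction, because on vertices where $C$ has slack the coordinates of $P$ and $Q$ need not lie in $\{0,1\}$, and configurations with $c_i\ge 2$ or $\deg(x_i)\ge 3$ (for instance the shared vertex of two triangles glued at a point, which is connected, non-bipartite, of minimum degree $2$, and is \emph{not} an induced cycle on its full vertex set) must be excluded by an actual argument, not by ``pushing this through should force all $c_i=1$.'' That exclusion is the substantive content of the hard direction of Theorem 1.1 in \cite{FHT}, and your proposal assumes it. Your endgame --- that once $C=\mathbf{1}$ the witness conditions say $G'$ is non-bipartite while $G'-x_j$ is bipartite for every $j$, whence a shortest odd cycle is spanning and chordless and equals $G'$ --- is correct, but it only becomes available after the normalization you have not established. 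So the skeleton and both reductions are sound, but the proof is incomplete at its central step.
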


We observe that the primes $P$ in 1 are the minimal primes of $(I^\vee)^2$ (and $(I^\vee)$) and that a graph $G$ is bipartite if and only if it has no induced cycles of odd size. We thus recover Theorem \ref{c} from Theorem \ref{d}.

We can easily obtain the following result from Theorem \ref{d}:
\begin{Theorem}\label{thm}
Let $P\subset K[x_1,...,x_n]$ be a monomial prime. Then $P$ is an associated prime of $(I(G)^\vee)^2$ for some 2-hypergraph $G$ of size $n$ if and only if $\mbox{height}(P)\geq 2$ or $\mbox{height}(P)$ is an odd integer greater than or equal to 3.
\end{Theorem}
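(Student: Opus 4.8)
The plan is to prove both directions of the equivalence by invoking Theorem \ref{d}, which already characterizes the associated primes of $(I(G)^\vee)^2$ for a 2-hypergraph. Throughout, I would set $r=\operatorname{height}(P)$ and write $P=(x_{i_1},\dots,x_{i_r})$; since $P$ is a monomial prime of $K[x_1,\dots,x_n]$ we automatically have $r\le n$, so any graph I build on the indicated vertices already fits inside a graph of size $n$.

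For the forward direction, suppose $P$ is an associated prime of $(I(G)^\vee)^2$ for some 2-hypergraph $G$ of size $n$. By Theorem \ref{d}, either $P=(x_{i_1},x_{i_2})$ with $\{x_{i_1},x_{i_2}\}$ an edge, in which case $r=2$, or $\{x_{i_1},\dots,x_{i_r}\}$ is an induced cycle of odd size for some ordering, in which case $r$ is odd; since a cycle has at least three vertices, $r\ge 3$. Either way $r=2$ or $r$ is an odd integer at least $3$, as required.

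For the converse I would exhibit, for each admissible $r$, an explicit graph $G$ on $\{x_1,\dots,x_n\}$ realizing $P$. If $r=2$, let $G$ be the graph whose only edge is $\{x_{i_1},x_{i_2}\}$, with the remaining $n-2$ vertices isolated; then part (1) of Theorem \ref{d} immediately makes $P$ a (minimal) associated prime. If $r$ is odd and $r\ge 3$, let $G$ consist of the single $r$-cycle $x_{i_1}-x_{i_2}-\cdots-x_{i_r}-x_{i_1}$ together with $n-r$ isolated vertices. The induced subgraph on $\{x_{i_1},\dots,x_{i_r}\}$ is then exactly this cycle with no chords, so deleting any one of its edges leaves a path on $r$ vertices, which contains no cycle through all $r$ vertices; hence the cycle is induced in the sense of the paper's definition. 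Part (2) of Theorem \ref{d} then yields that $P$ is an associated prime, and $G$ has size $n$.

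The only place any care is required concerns the isolated vertices, and this is the point I expect to be the mildest obstacle: I must confirm that adjoining vertices lying on no edge neither introduces new edges on $\{x_{i_1},\dots,x_{i_r}\}$ (which would spoil inducedness) nor creates new associated primes. The first is clear from the construction, and the second follows because $(I(G)^\vee)^2$ is a monomial ideal involving only the non-isolated variables, so its associated primes are unchanged under the polynomial extension by the isolated variables and remain exactly those described by Theorem \ref{d}. With these observations the equivalence follows; the genuinely substantive input is Theorem \ref{d} itself, and the remaining work is the direct edge-and-cycle construction above.
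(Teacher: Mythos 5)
Your proposal is correct and follows essentially the same route the paper intends: the paper gives no written proof, saying only that the result is easily obtained from Theorem \ref{d}, and your explicit single-edge and chordless odd-cycle constructions, together with the observation that adjoining isolated vertices amounts to a polynomial extension under which the associated primes simply extend (the same device the paper uses explicitly in its proof of Theorem \ref{thm2}), supply exactly the intended details. One remark: the condition ``$\mbox{height}(P)\geq 2$'' in the printed statement must be read as ``$\mbox{height}(P)=2$'' (otherwise the second disjunct is vacuous and the claim would contradict Theorem \ref{d} for even heights above $2$), and your proof correctly establishes this corrected reading.
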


In the case of an m-hypergraph $H$, the minimal primes of $(I^\vee)^2$ are the primes $(x_{i_1},...,x_{i_m})$ such that $\{x_{i_1},...,x_{i_m}\}$ is an edge of $H$, as follows from equation (\ref{*}). Thus statement 1 of Theorem \ref{d} generalizes immediately to m-hypergraphs.
However, Theorem \ref{thm} and statement 2 of Theorem \ref{d} are only applicable to 2-hypergraphs. In contrast with Theorem \ref{thm}, we have the following theorem for 3-hypergraphs.

\begin{Theorem}\label{thm2}
Let $P\subset K[x_1,...,x_n]$ be a monomial prime. Then $P$ is an associated prime of $(I(H)^\vee)^2$ for some 3-hypergraph $H$ of size $n$ if and only if $\mbox{height}(P)\geq3$.
\end{Theorem}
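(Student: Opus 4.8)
The plan is to prove both implications, reducing the existence direction to an explicit construction that is then certified by Theorem \ref{finder}. For the forward direction, recall from the discussion following (\ref{*}) that the minimal primes of $(I(H)^\vee)^2$ are exactly the ideals $(x_{i_1},x_{i_2},x_{i_3})$ with $\{x_{i_1},x_{i_2},x_{i_3}\}$ an edge of the $3$-hypergraph $H$, each of height $3$. Since $(I(H)^\vee)^2$ is a monomial ideal, every associated prime is a monomial prime containing $\sqrt{(I(H)^\vee)^2}$, hence containing one of these minimal primes, so it has height at least $3$. This gives necessity, and it also settles sufficiency when $\operatorname{height}(P)=3$: writing $P=(x_{i_1},x_{i_2},x_{i_3})$ and letting $H$ be the $3$-hypergraph of size $n$ whose only edge is $\{x_{i_1},x_{i_2},x_{i_3}\}$ makes $P$ a minimal, hence associated, prime of $(I(H)^\vee)^2$.

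For $q=\operatorname{height}(P)\geq 4$ I would proceed as follows. By Theorem \ref{indt}, $P=(x_{i_1},\dots,x_{i_q})$ is associated to $(I(H)^\vee)^2$ if and only if the maximal ideal $\mathfrak{m}=(x_{i_1},\dots,x_{i_q})$ is associated to $(I(H')^\vee)^2$, where $H'$ is the induced subhypergraph on $\{x_{i_1},\dots,x_{i_q}\}$. Hence it suffices to construct, for each $q\geq 4$, a $3$-hypergraph $H'$ on exactly $q$ vertices for which $\mathfrak{m}$ is associated, and then pad with $n-q$ isolated vertices to reach size $n$ (this leaves the induced subhypergraph on those $q$ vertices equal to $H'$, so Theorem \ref{indt} applies). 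To detect $\mathfrak{m}$ I apply Theorem \ref{finder} with $P=\mathfrak{m}$: the only monomial lying outside $\mathfrak{m}$ is a unit, so the third condition of that theorem is automatic, and I am reduced to producing a single \emph{critical} $2$-cover $C$, by which I mean a $2$-cover that is not a sum of two $1$-covers but for which $C+e_j$ is a sum of two $1$-covers for every coordinate $j$, where $e_j$ is the $j$th unit vector (so $\overline{C+e_j}=x_j\overline{C}$).

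The cleanest construction uses an apex. Given a simple graph $G$ on the vertices $\{x_2,\dots,x_q\}$, let $H'$ be the $3$-hypergraph with edges $\{x_1,x_i,x_j\}$ for each edge $\{x_i,x_j\}$ of $G$, and take $C=(0,1,1,\dots,1)$. A direct check shows: $C$ is always a $2$-cover; $C$ is a sum of two $1$-covers if and only if $G$ is bipartite (since $c_1=0$ forces the apex to $0$ in any decomposition, and each non-apex coordinate must split as $0+1$, so a decomposition is exactly a proper $2$-coloring of $G$); $C+e_1$ is always a sum of two $1$-covers, because the apex $x_1$ meets every edge (take $A=(1,0,\dots,0)$ and $B=(0,1,\dots,1)$); and for $j\geq 2$, $C+e_j$ is a sum of two $1$-covers if and only if $G-x_j$ is bipartite (place $x_j$ in both color classes, which covers every edge through $x_j$ and leaves the edges of $G-x_j$ to be properly colored). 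Thus $C$ is critical exactly when $G$ is non-bipartite but $G-x_j$ is bipartite for every $j$, i.e. when $G$ is an odd cycle on all $q-1$ vertices. This succeeds precisely when $q-1$ is odd, producing every even height $q\geq 4$, and it transparently lifts the odd-cycle phenomenon of Theorem \ref{d} up one dimension.

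The main obstacle is the odd-height case. The apex reduction cannot reach it: it would require the maximal ideal of the base graph $G$ on the even-sized set $\{x_2,\dots,x_q\}$ to be associated to $(I(G)^\vee)^2$, which by Theorem \ref{d} forces $G$ to be an odd cycle and hence $q-1$ to be odd. For odd $q$ I therefore need a genuinely $3$-uniform critical cover. The base case $q=5$ is handled by the complete $3$-hypergraph $K_5^{(3)}$ with $C=(1,1,1,1,1)$: it is not $2$-colorable, so $C$ is not a sum of two $1$-covers, yet deleting the edges through any single vertex leaves $K_4^{(3)}$, which is $2$-colorable, so every $C+e_j$ is a sum of two $1$-covers. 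For general odd $q$ I would exhibit a family of $3$-uniform hypergraphs that are not $2$-colorable but become $2$-colorable after deleting the edges through any one vertex—equivalently, a family of critical covers—built by a gadget that enlarges such an example by two vertices at a time, in analogy with the subdivision taking $C_r$ to $C_{r+2}$. Verifying the two conditions of Theorem \ref{finder} for this family, and in particular that incrementing \emph{every} coordinate restores decomposability, is where I expect the real work to lie.
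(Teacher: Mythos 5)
Your overall framework is sound: the necessity argument, the height-$3$ case, the padding to size $n$ via Theorem \ref{indt}, and the reduction via Theorem \ref{finder} to producing a single critical $2$-cover (with the third condition automatic for the maximal ideal) are all correct, and your apex construction for even $q$ and the $K_5^{(3)}$ computation for $q=5$ check out. But there is a genuine gap: the case of odd height $q\geq 7$ is not proved. You explicitly defer it to an unspecified ``gadget that enlarges such an example by two vertices at a time,'' and that deferred step is precisely the heart of the theorem. The paper's proof of Theorem \ref{thm2} rests on Theorem \ref{t2}, whose entire (long) proof is devoted to exactly this kind of explicit construction, carried out in four cases according to $n \bmod 4$: tight-cycle-style $3$-hypergraphs with edge ideal $(x_1x_2x_3,\,x_3x_4x_5,\dots,x_{n-2}x_{n-1}x_n,\,x_nx_1x_2)$, augmented by one extra edge ($x_4x_5x_6$ when $n\equiv 3$, $x_2x_3x_4$ when $n\equiv 0 \pmod 4$), together with a careful parity/induction argument showing the cover $A_S$ is not a sum of two $1$-covers, and explicit decompositions of $x_k\overline{A_S}$ for \emph{every} coordinate $k$ --- including the coordinates where $A_S$ vanishes, which is the delicate part your sketch waves at. Without defining your gadget and verifying both conditions of Theorem \ref{finder} for it, the theorem is established only for height $3$, all even heights, and height $5$.

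Where your argument is complete, it is a genuinely different and arguably cleaner route than the paper's. The paper proves Theorem \ref{t2} uniformly for the maximal ideal on $n$ vertices and then passes to size $n$ by extending the ring (noting $(I(H')^\vee)^2=(I(H)^\vee)^2K[x_1,\dots,x_n]$), whereas you invoke Theorem \ref{indt} directly --- equivalent, and fine. Your cone-over-an-odd-cycle construction for even $q$, with $C=(0,1,\dots,1)$ and the dictionary between decompositions and proper $2$-colorings, transparently lifts the odd-cycle phenomenon of Theorem \ref{d} and is not in the paper; your observation that the apex reduction provably cannot reach odd heights correctly diagnoses why a genuinely $3$-uniform family (non-$2$-colorable, but $2$-colorable after deleting the edges through any one vertex) is unavoidable there. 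One natural way to close your gap would be to verify that the paper's tight-cycle family furnishes exactly the critical covers your plan requires.
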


The proof of this theorem will be given after Theorem \ref{t2}, which is the essential case of the proof.
\begin{Theorem}\label{t2}
For all $n\geq 3$, there exists a 3-hypergraph of size $n$ with edge ideal $I$ such that there is an associated prime of $(I^\vee)^2$ of height $n$.
\end{Theorem}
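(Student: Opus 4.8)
The plan is to apply Theorem \ref{finder} with $P$ equal to the maximal ideal $(x_1,\dots,x_n)$, whose height is exactly $n$. Since the only monomials $W\notin P$ are units, the clause ``$W\overline{C}$ is not a sum of two $1$-covers for all $W\notin P$'' is automatic, so the task collapses to producing, for each $n$, a $3$-hypergraph together with a single $2$-cover $C$ such that (i) $C$ is not a sum of two $1$-covers, while (ii) for every vertex $x_j$ the monomial $x_j\overline{C}$ corresponds to a $2$-cover that \emph{is} a sum of two $1$-covers. For $n=3$ there is nothing to do: the one-edge hypergraph has $(x_1,x_2,x_3)$ as its minimal prime, of height $3$. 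For $n\ge 4$ the ideal $(x_1,\dots,x_n)$ has height $n>3$, hence is non-minimal, and Theorem \ref{finder} applies.

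The main idea is an \emph{apex construction} that converts the splitting condition into graph bipartiteness. Fix a vertex $s$ and require every edge to contain $s$; writing each edge as $\{s,u,v\}$, let $G$ be the graph on $V\setminus\{s\}$ with an edge $\{u,v\}$ for each such triple. Take $C=(c_1,\dots,c_n)$ with $c_s=0$ and $c_i=1$ otherwise, so that $C$ is a $2$-cover (each edge sums to $2$). I claim that writing $C=A+B$ with $A,B$ $1$-covers amounts exactly to a proper $2$-colouring of $G$: on an edge $\{s,u,v\}$ one has $a_s=b_s=0$, so $a_u+a_v\ge 1$ and $b_u+b_v\ge 1$ force $\{u,v\}$ to get different colours, and conversely. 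Hence $C$ is not a sum of two $1$-covers iff $G$ is not bipartite. The same bookkeeping shows that for $j\ne s$ the monomial $x_j\overline{C}$ is a sum of two $1$-covers iff $G-j$ is bipartite (set $a_j=b_j=1$, which automatically covers the edges through $j$), while $x_s\overline{C}$ is the all-ones vector, always a sum of two $1$-covers because the cone over $G$ is $2$-colourable (give $s$ one colour and everything else the other). Thus (i)--(ii) hold precisely when $G$ is non-bipartite yet $G-j$ is bipartite for every $j$, i.e.\ when $G$ is vertex-critically non-bipartite. Taking $G$ to be an odd cycle on the $n-1$ vertices of $V\setminus\{s\}$ does exactly this, settling every \emph{even} $n\ge 4$.

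The remaining case is odd $n$, and this is where the main obstacle sits: a vertex-critically non-bipartite graph is an odd cycle, which has an odd number of vertices, so no single odd cycle can occupy the $n-1$ (even) non-apex vertices. To repair the parity I would pass to a \emph{two-apex} variant: take apexes $s_1,s_2$, an odd cycle $G$ on the remaining $n-2$ (now odd) vertices, realise one edge of $G$ as a triple through $s_1$ and all the others as triples through $s_2$, and set $C$ to be $0$ on $s_1,s_2$ and $1$ elsewhere. The non-splitting statement (i) and the increments at non-apex vertices transfer verbatim, each reducing to $G$, respectively $G-j$, being bipartite, and $G-j$ is a path. The only genuine computations are the two apex increments $x_{s_1}\overline{C}$ and $x_{s_2}\overline{C}$, where one apex-triple is ``tight'' (its apex still carries value $0$) and the others are ``loose'' (sum $3$); here I would write down explicit $2$-colourings, using that deleting the single $s_1$-edge leaves $G$ a path, so the triples through the other apex impose only a path-colouring while the lone remaining triple is made non-monochromatic by recolouring the incremented apex vertex opposite to its two neighbours. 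I expect these two explicit colourings, reconciling the single tight triple at one apex with the loose triples at the other, to be the fiddly part; everything else is the bipartite dictionary above. Combining the even and odd constructions with the trivial case $n=3$ then yields, for every $n\ge 3$, a $3$-hypergraph whose $(I^\vee)^2$ has the height-$n$ associated prime $(x_1,\dots,x_n)$.
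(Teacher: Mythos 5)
Your proposal is correct, but it takes a genuinely different route from the paper's proof. The paper argues by a four-way case analysis on $n \bmod 4$, using ``loose cycle'' hypergraphs such as $I=(x_1x_2x_3,x_3x_4x_5,\dots,x_{n-2}x_{n-1}x_n,x_nx_1x_2)$ (with one extra edge in two of the cases), taking $C=A_s$ for an explicit independent set $S$, proving non-splitting by an inductive parity argument along the cycle, and then exhibiting the two $1$-covers for each increment $x_k\overline{A_s}$ by hand, again in subcases. Your apex construction replaces all of that bookkeeping with a single dictionary: splittings of $C$ (resp.\ of $C+e_j$) correspond exactly to proper $2$-colourings of the link graph $G$ (resp.\ of $G-j$), so the hypotheses of Theorem \ref{finder} for the maximal ideal --- whose third clause you rightly observe is vacuous, since the only monomial outside $(x_1,\dots,x_n)$ is $1$ --- become precisely ``$G$ is vertex-critically non-bipartite.'' This buys conceptual clarity: it explains where the conditions come from rather than verifying them edge by edge, and your diagnosis of the parity obstruction for odd $n$ is sound (a non-bipartite graph all of whose vertex-deletions are bipartite has every vertex on a shortest odd cycle, and a chord would create a shorter odd cycle missing a vertex, so such a graph is an odd cycle, which has oddly many vertices). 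The two colourings you deferred in the two-apex repair do exist and are one-liners, so the sketch completes: for $x_{s_2}\overline{C}$ take $A$ supported on $\{s_2,u_1\}$ and $B$ the indicator of $\{u_2,\dots,u_{n-2}\}$, so $A$ covers every $s_2$-triple through $s_2$ and the tight triple through $u_1$, while $B$ covers the tight triple through $u_2$ and each remaining $s_2$-triple through an endpoint other than $u_1$; for $x_{s_1}\overline{C}$, $2$-colour the path $u_2u_3\cdots u_{n-2}u_1$ (which is $G$ minus the tight edge) alternately starting at $u_2$ --- since the path has oddly many vertices, $u_1$ and $u_2$ receive the same colour, so $b_{u_1}=b_{u_2}=1$ covers the tight triple on the $B$ side while $a_{s_1}=1$ covers it on the $A$ side, and the alternation handles all $s_2$-triples for both covers. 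One last point worth checking when splicing an argument into this paper: Corollary \ref{cor2} relies on the witnesses in the proof of Theorem \ref{t2} being connected, and your hypergraphs are (every edge meets an apex, and the tight triple shares $u_1,u_2$ with the rest), so that downstream use survives intact.
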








\begin{proof}
We will prove the theorem in 4 cases.

Suppose $n$ is an odd number of the form $4t+1$ greater than 3.
Let $H$ be the 3-hypergraph with vertex set $V$, edge set $E$ and edge ideal $I$ such that
\begin{equation}\label{eq1}
I=(x_1x_2x_3,x_3x_4x_5,x_5x_6x_7,...,x_{n-2}x_{n-1}x_n,x_nx_1x_2).
\end{equation}
Let $S=\{x_2,x_4,x_6,...,x_{n-1}\}$ where $x_i\in S$ if and only if $i\leq n$ is even. Clearly, $S$ is an independent set. Further, $N(S)=\emptyset$, so $A_s=(a_1,a_2,...,a_n)$ where $a_i=1$ if $i$ is odd, and $a_i=0$ otherwise (we use the notation preceding Proposition \ref{prop5}). We will now show that $A_s$ is not the sum of two 1-covers. Suppose $A_s=Q+P$ where $Q=(q_1,...,q_n)$ and $P=(p_1,...,p_n)$ are 1-covers. Without loss of generality, assume $q_1=1$, $p_1=0$. Since $a_i=0$ for $i$ even, $q_i=p_i=0$ for $i$ even. For all odd $k<n$, $\{x_k,x_{k+1},x_{k+2}\}$ is in $E$. Thus if $k<n$ is odd, and $q_k=1$, we have $p_k=0$, $q_{k+2}=0$, and $p_{k+2}=1$. Similarily, if $p_k=1$, we have $q_k=0$, $p_{k+2}=0$, and $q_{k+2}=1$. We will show by induction that $q_i=1$ if and only if $i$ is of the form $4t+1$. Suppose $q_i=1$ if and only if $i$ is of the form $4n+1$ for all $i\leq 4j+1$. Then we have $q_{4j+2}=q_{4j+4}=0$ since $4j+2$ and $4j+4$ are even, and $q_{4j+3}=0$ since $q_{4j+1}=1$ and $4j+1$ is odd. Further, $p_{4j+3}=1$. Thus $q_{4j+5}=1$. $4j+5=4(j+1)+1$. Thus by induction, $q_i=1$ if and only if $i=4t+1$ for some $t$. Further, since $A_s=Q+P$, $p_i=1$ if and only if $i=4t+3$ for some $t$. Since $n$ is of the form $4t+1$, $p_i=0$ for $i\in\{n,1,2\}$. But $\{x_n,x_1,x_2\}\in E$. Thus $P$ is not a 1-cover, which is contradiction. Thus we see that $A_s$ is not the sum of two 1-covers, and so $\overline{A_s}$ is not in $(I^\vee)^2$. Now we must show that $((I^\vee)^2:\overline{A_s})=(x_1,x_2,...,x_n)$. This is equivalent to proving that $x_i\overline{A_s}\in(I^\vee)^2$ for all $i\leq n$. Let $k\leq n$. Suppose $k$ is odd. Then let $Q=(q_1,...,q_n)$ where $q_1=1$ and $q_i=1$ for all odd $i$ of the form $4t+1$ less than or equal to $k$ and $q_i=1$ for all odd $i$ of the form $4t+3$ greater than or equal to $k$, and $q_i=0$ everywhere else. Let $P=(p_1,...,p_n)$ where $p_i=1$ for all odd $i$ of the form $4t+3$ less than or equal to $k$, $p_i=1$ for all odd $i$ of the form $4t+1$ greater than or equal to $k$ , and $p_i=0$ everywhere else. Then it is clear that $x_k\overline{A_s}=\overline{Q+P}$. For all odd $i$, either $q_i=1$ and $p_{i-2}=1$ or vice versa. Also, $p_n=1$, so $Q$ and $P$ are 1-covers. Suppose $k$ is even, and of the form $4t$. Let $Q=(q_1,...,q_n)$ where $q_k=1$, $q_1=1$ and $q_i=1$ for all odd $i$ of the form $4t+1$ less than or equal to $k$ and $q_i=1$ for all odd $i$ of the form $4t+3$ greater than or equal to $k$, and $q_i=0$ everywhere else. Let $P=(p_1,...,p_n)$ where $p_i=1$ for all odd $i$ of the form $4t+3$ less than or equal to $k$, $p_i=1$ for all odd $i$ of the form $4t+1$ greater than or equal to $k$ , and $p_i=0$ everywhere else. Then it is clear that $x_k\overline{A_s}=\overline{Q+P}$. Then for $i\neq k+1$, either $q_i=1$ and $p_{i-2}=1$ or vice versa. Further, for $i=k+1$, $p_i=p_{i-2}=1$, but $q_k=1$. Thus $Q$ and $P$ are 1-covers. Finally, suppose $k$ is of the form $4t+2$. Let $Q$ and $P$ be as in the previous cases, with the difference that $q_k=0$ and $p_k=1$. Then once again, $Q$ and $P$ are 1-covers and $x_k\overline{A_s}=\overline{Q+P}$.
Thus $x_k\overline{A_s}\in (I^\vee)^2$. Therefore $(x_1,x_2,...,x_n)=((I^\vee)^2:\overline{A_s})$ is an associated prime of $(I^\vee)^2$.

Now suppose $n$ is an odd number of the form $4t+3$ greater than 3.
Let
\begin{equation}\label{eq4}
I=(x_1x_2x_3,x_3x_4x_5,x_5x_6x_7,...,x_{n-2}x_{n-1}x_n,x_nx_1x_2,x_4x_5x_6).
\end{equation}
Let $S=\{x_2,x_5,x_8,x_{10},x_{12},...,x_{n-1}\}$, so $x_i\in S$ if and only if $i=5$, $i=2$, or $i$ is an even number greater than or equal to 8 and less than or equal to $n-1$. $S$ is clearly an independent set, and $N(S)=\emptyset$. We will show that $A_s=(a_1,...,a_n)$ where $a_i=0$ if $x_i\in S$ and $a_i=1$ if $x_i\notin S$ is not the sum of two 1-covers. Suppose $A_s=Q+P$ where $Q=(q_1,...,q_n)$ and $P=(p_1,...,p_n)$ are 1-covers. Without loss of generailty, suppose $q_1=1$. Then we must have $p_3=1$, $q_4=1$, $p_6=1$, and $q_7=1$ since $Q$ and $P$ are 1-covers. Now we may use the argument in the proof of part 1 to say that if $i$ is greater than 7, $q_i=1$ if and only if $i$ is of the form $4t+3$, and $p_i=1$ if and only if $i$ is of the form $4t+1$. However, we now have $p_i=0$ for $i\in \{n,1,2\}$ since $n$ is of the form $4t+3$. But $\{x_n,x_1,x_2\}\in E$, so $P$ is not a 1-cover. Thus $A_s$ cannot be written as the sum of two 1-covers. Thus $\overline{A_s}\notin (I^\vee)^2$. Now we must show that $x_i\overline{A_s}\in (I^\vee)^2$ for $1\leq i \leq n$, which once again means we must show that $x_i\overline{A_s}$ corresponds to a sum of two 1-covers. Let $Q$ and $P$ be as just defined, so $Q+P=A_s$, but $P$ is not a 1-cover. Let $k\leq n$. Let $F=(f_1,...,f_n)$, $G=(g_1,...,g_n)$, where $f_i=q_i$ for $i<k$, $f_i=p_i$ for $i>k$, and $g_i=p_i$ for $i<k$ and $g_i=q_i$ for $i>k$. and $f_k=g_k=1$ if $a_k=1$, $f_k=1$, $g_k=0$ if $p_{k-1}=1$, and vice versa if $q_{k-1}=1$. Then we will have $\overline{F+G}=x_k\overline{A_s}$ and $F$ and $G$ are 1-covers. Thus $x_k\overline{A_s}\in(I^\vee)^2$, and so $(x_1,...,x_n)$ is an associated prime of $(I^\vee)^2$.

Now suppose $n$ is an even number, and $n=4z+2$ for some $z\geq 1$.
Let
\begin{equation}\label{eq2}
I=(x_1x_2x_3,x_3x_4x_5,x_5x_6x_7,...,x_{n-3}x_{n-2}x_{n-1},x_{n-1}x_nx_1).
\end{equation}
Let $S=\{x_2,x_4,...,x_{n}\}$, where $x_i\in S$ if and only if $i\leq n$ is even. $S$ is clearly independent, and $N(S)=\emptyset$. We will show that $A_s=(a_1,...,a_n)$ is not the sum of two 1-covers. Note that all the components of $A_s$ are either 1 or 0. Further, note that any vertex in the hypergraph is included in at most 2 edges. Thus if $W=(w_1,...,w_n)$ is some element of $\textbf{N}^n$ whose components are either 0 or 1 and has $q$ ones, then at most $2q$ edges $\{x_i,x_j,x_k\}$ have the property $w_i+w_j+w_k\geq 1$. Thus in order for an element of $\textbf{N}^n$ whose components are all 0 or 1 to be a 1-cover, at least $\frac{n}{4}$ components must be 1 since our hypergraph has $\frac{n}{2}$ edges. Since $n=4z+2$, this means that a 1-cover must have at least $z+1$ non-zero components. $A_s$ has $\frac{n}{2}=2z+1$ components whose value is 1. Thus if $A_s=Q+P$ where $Q$ is a 1-cover, $P$ can have at most $z$ non-zero components, and so $P$ is not a 1-cover. Therefore $A_s$ is not a sum of 1-covers, and $\overline{A_S}\notin (I^\vee)^2$.
Now we must show that $x_k\overline{A_s}\in(I^\vee)^2$ for all $k\leq n$. Suppose $k$ odd. Let $Q=(q_1,...,q_n)$, $P=(p_1,...,p_n)$ where $q_i=1$ for odd $i<k$ of the form $4t+1$, $q_i=1$ for odd $i>k$ of the form $4t+3$, $q_k=1$ and all other $q_i=0$. Similarily, let $p_i=1$ for odd $i<k$ of the form $4t+3$, $p_i=1$ for odd $i>k$ of the form $4t+1$, $p_k=1$ and all other $p_i=0$. Then it is clear that $\overline{Q+P}=x_k\overline{A_s}$. Further, $Q$ and $P$ are 1-covers. Suppose $k$ is even. Let $Q$ and $P$ be as before, with the alteration that $q_k=1$ if only if $k$ is of the form $4t$, and $p_k=1$ if only if $k$ is of the form $4t+2$. Then once again, $\overline{Q+P}=x_k\overline{A_s}$ and $Q$ and $P$ are 1-covers. Thus $(x_1,...x_n)$ is an associated prime of $(I^\vee)^2$.

Now suppose $n$ is an even number, and $n=4z$ for some $z\geq 1$.
Let
\begin{equation}\label{eq3}
I=(x_1x_2x_3,x_3x_4x_5,x_5x_6x_7,...,x_{n-3}x_{n-2}x_{n-1},x_{n-1}x_nx_1,x_2x_3x_4).
\end{equation}
Let $S=\{x_3,x_6,x_8,...,x_n\}$, so $x_i\in S$ if and only if $i=3$ or $i$ is an even number greater than or equal to 6 and less than or equal to $n$. $S$ is clearly independent, and $N(S)=\emptyset$. Again, we will show $A_s=(a_1,...,a_n)$ is not a sum of 1-covers. Suppose $A_s=Q+P$ where $Q=(q_1,...,q_n)$ and $P=(p_1,...,p_n)$ are 1-covers. Without loss of generality, suppose $q_1=1$. Then we must have $p_2=1$, $q_4=1$ and $p_5=1$ in order for $Q$ and $P$ to be 1-covers. Now we may use an argument as in the previous cases to show that for $i$ larger than 5, $q_i=1$ if and only if $i$ is of the form $4n+3$ and $p_i=1$ if and only if $i$ is of the form $4n+1$. However, $p_i=0$ for $i\in\{n-1,n,1\}$ since $n=4z$. Since $\{x_{n-1},x_n,x_1\}\in E$, $P$ is not a 1-cover. Thus $A_s$ is not a sum of two 1-covers. Now we show that $x_k\overline{A_s}\in(I^\vee)^2$ for all $k\leq n$. Let $F=(f_1,...,f_n)$, $G=(g_1,...,g_n)$ be defined as follows. Let $F$ be such that $f_i=q_i$ for $i<k$, $f_i=p_i$ for $i>k$, $f_k=1$ if $a_k=1$ or $p_{k-1}=1$ and $f_k=0$ otherwise. Similarily, let $G$ be such that $g_i=p_i$ for $i<k$, $g_i=q_i$ for $i>k$, $g_k=1$ if $a_k=1$ or $q_{k-1}=1$ and $g_k=0$ otherwise. Then $F$ and $G$ are 1-covers and $\overline{F+G}=x_k\overline{A_s}$. Thus $(x_1,...,x_n)$ is an associated prime of $(I^\vee)^2$.
\end{proof}

Now we will prove Theorem \ref{thm2}.
Suppose a monomial prime $P\subset K[x_1,...,x_n]$ is an associated prime of $(I(H)^\vee)^2$ for some 3-hypergraph $H$. Since the minimal associated primes of $(I(H)^\vee)^2$ are of height 3, $\mbox{height}(P)\geq3$.
Suppose $P\subset K[x_1,...,x_n]$ is a monomial prime with $\mbox{height}(P)\geq 3$. Then $P=(x_{i_1},...,x_{i_r})$ for some $1\leq i_1<\cdots<i_r\leq n$. By Theorem \ref{t2}, there exists a 3-hypergraph $H$ of size $r$ such that the ideal $Q=(x_{i_1},...,x_{i_r})\subset K[x_{i_1},...,x_{i_r}]$ is an associated prime of $(I(H)^\vee)^2$. Let $H'$ be the 3-hypergraph with vertex set $\{x_1,...,x_n\}$ and the same edge set as $H$. Then, because they have the same generators, $(I(H')^\vee)^2=(I(H)^\vee)^2 K[x_1,...,x_n]$. Further, $P=K[x_1,...,x_n]Q$, so $P$ is an associated prime of $(I(H')^\vee)^2$. Thus we have a 3-hypergraph of size $n$ such that $P$ is an associated prime of the square of the Alexander dual of the edge ideal.
\vskip.2truein

A hypergraph is connected if for any two vertices $x_i$ and $x_j$, there exists an ordered set of vertices $\{x_{p_1},...,x_{p_q}\}$ such that $x_{p_1}=x_i$, $x_{p_q}=x_j$ and $x_{p_k}$ is connected by an edge to $x_{p_{k+1}}$ for all $k<q$.

\begin{Corollary}\label{cor2}
For any integers $n$ and $q$, $n\geq q\geq3$, there exists a connected 3-hypergraph of size $n$ such that the square of the Alexander dual of the edge ideal has an associated prime of height $q$.
\end{Corollary}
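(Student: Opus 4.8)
The plan is to combine Theorem \ref{t2} with the induced-subhypergraph result of Theorem \ref{indt}. By Theorem \ref{t2} there is a $3$-hypergraph $H_0$ on the vertex set $\{x_1,\dots,x_q\}$ whose edge ideal $I_0$ has $(x_1,\dots,x_q)$ as an associated prime of $(I_0^\vee)^2$, a prime of height $q$. The explicit hypergraphs produced in that proof are ``cycles'' of overlapping triples (edges $x_1x_2x_3,\ x_3x_4x_5,\dots$ closing up through $x_nx_1x_2$, possibly with one extra overlapping triple) and are therefore connected; the remaining case $q=3$ is the single edge $\{x_1,x_2,x_3\}$, which is trivially connected and has minimal prime $(x_1,x_2,x_3)$ of height $3$. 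So I may assume $H_0$ is a connected $3$-hypergraph of size $q$ realizing the height-$q$ associated prime.

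To reach size $n$, I would enlarge $H_0$ to a $3$-hypergraph $H$ on $\{x_1,\dots,x_n\}$ by keeping every edge of $H_0$ and adjoining, for each $j=1,\dots,n-q$, the single edge $\{x_1,x_2,x_{q+j}\}$. Since $q\ge 3$, both $x_1$ and $x_2$ are vertices of $H_0$. Each new edge contains a fresh vertex $x_{q+j}$ that is thereby adjacent to $x_1$; as $H_0$ is connected and contains $x_1$, every new vertex reaches every old vertex through $x_1$, so $H$ is connected.

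The crux is that the induced subhypergraph of $H$ on $\{x_1,\dots,x_q\}$ is exactly $H_0$. Every adjoined edge $\{x_1,x_2,x_{q+j}\}$ contains $x_{q+j}$ with $q+j>q$, so it is not an edge all of whose vertices lie in $\{x_1,\dots,x_q\}$, and hence it is discarded when one forms the induced subhypergraph. Therefore no new edge supported on the first $q$ vertices is created, and the induced subhypergraph on $\{x_1,\dots,x_q\}$ remains $H_0$. Applying Theorem \ref{indt}, since $(x_1,\dots,x_q)\subset K[x_1,\dots,x_q]$ is an associated prime of $(I(H_0)^\vee)^2$, the ideal $(x_1,\dots,x_q)\subset K[x_1,\dots,x_n]$ is an associated prime of $(I(H)^\vee)^2$; being generated by $q$ of the $n$ variables, it has height $q$. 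This exhibits the required connected $3$-hypergraph of size $n$ with an associated prime of height $q$.

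The main obstacle is precisely the bookkeeping isolated above: one must choose the connecting edges so that they never introduce an edge supported entirely on $\{x_1,\dots,x_q\}$, for otherwise the hypothesis of Theorem \ref{indt} could fail. Using a fresh vertex in each connecting edge guarantees this, so all of the delicate cover computations verifying the height-$q$ associated prime are inherited from Theorem \ref{t2} through Theorem \ref{indt}, and no new combinatorial analysis is needed. The remaining routine point to confirm is simply that each base construction of Theorem \ref{t2} is connected, which is immediate from its cyclic edge structure.
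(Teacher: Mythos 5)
Your proposal is correct and is essentially the paper's own proof: both start from the connected hypergraphs constructed in Theorem \ref{t2} on $\{x_1,\dots,x_q\}$, adjoin the edges $\{x_1,x_2,x_{q+j}\}$ for each new vertex $x_{q+j}$, and conclude via Theorem \ref{indt} because the induced subhypergraph on the first $q$ vertices is unchanged. Your explicit checks that each adjoined edge contains a fresh vertex (so no new edge lands inside $\{x_1,\dots,x_q\}$) and that $q=3$ is covered by a single edge merely spell out details the paper leaves implicit.
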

\begin{proof}
In all cases of the proof of Theorem \ref{t2}, the hypergraphs used were connected. Thus for any $q\geq3$ we have a connected 3-hypergraph of size $q$ such that the square of the Alexander dual of the edge ideal has an associated prime of height $q$. Let $H'$ be such a hypergraph, with the vertex set of $H$ equal to $\{x_1,...,x_q\}$. Then add the vertices $\{x_{q+1},...,x_{n}\}$ and the edges $\{x_1,x_2,x_{q+1}\}$, $\{x_1,x_2,x_{q+2}\}$,...,$\{x_1,x_2,x_{n}\}$. Call this new graph $H$. Then $H'$ is an induced subhypergraph in $H$. Thus by Theorem \ref{indt}, $(I(H)^\vee)^2$ must have an associated prime of height $q$.
\end{proof}

Theorem \ref{thm2} also generalizes very nicely to $m$-hypergraphs where $m\geq 3$.

\begin{Theorem}\label{multd}
Let $P\subset K[x_1,...,x_n]$ be a monomial prime, $n\geq m \geq 3$ an integer. Then $P$ is an associated prime of $(I(H)^\vee)^2$ for some $m$-hypergraph $H$ of size $n$ if and only if $\mbox{height}(P)\geq m$.
\end{Theorem}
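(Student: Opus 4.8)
The plan is to follow the architecture of the proof of Theorem~\ref{thm2}. The forward implication is immediate: by~(\ref{*}) the minimal primes of $(I(H)^\vee)^2$ are exactly the ideals $(x_{i_1},\dots,x_{i_m})$ arising from edges, and every non-minimal associated prime has height at least $m+1$ (as observed in the proof of Theorem~\ref{finder}); hence any associated prime has height at least $m$. For the converse I would first prove the essential case: for every $m\ge 3$ and every $n\ge m$ there is an $m$-hypergraph of size $n$ whose maximal ideal $(x_1,\dots,x_n)$, of height $n$, is an associated prime of $(I^\vee)^2$. From this the theorem follows exactly as Theorem~\ref{thm2} follows from Theorem~\ref{t2}, by the extension-of-ideals argument described at the end.

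For the essential case, $n=m$ is trivial: the single-edge $m$-hypergraph on $\{x_1,\dots,x_m\}$ has $(I^\vee)^2=(x_1,\dots,x_m)^2$, whose unique (minimal) associated prime is the maximal ideal. For $n>m$ I would build the hypergraph by \emph{coning}. The key step is the claim that if $H$ is an $(m-1)$-hypergraph on $\{x_1,\dots,x_{n-1}\}$ whose maximal ideal is a non-minimal associated prime of $(I(H)^\vee)^2$, witnessed via Theorem~\ref{finder} by a $2$-cover $C=(c_1,\dots,c_{n-1})$, then the $m$-hypergraph $H'$ with vertex set $\{x_1,\dots,x_n\}$ and edges $\{e\cup\{x_n\}:e\in E(H)\}$ has maximal ideal $(x_1,\dots,x_n)$ as a non-minimal associated prime, witnessed by $C'=(c_1,\dots,c_{n-1},0)$. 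Starting from the $3$-hypergraph of size $n-m+3\ge 4$ supplied by Theorem~\ref{t2} (whose witness is the cover $A_s$ appearing in that proof) and applying the claim $m-3$ times, each intermediate hypergraph having size strictly larger than its edge-size so the maximal ideal stays non-minimal, yields the desired $m$-hypergraph of size $n$.

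To prove the coning claim I would verify the hypotheses of Theorem~\ref{finder}. First, $C'$ is a $2$-cover of $H'$, since for each edge $e\cup\{x_n\}$ of $H'$ the $C'$-sum equals the $C$-sum over $e$, which is at least $2$. Second, $C'$ is not a sum of two $1$-covers of $H'$: any decomposition $C'=D+E$ into $1$-covers must have $x_n$-coordinate $0$ in both $D$ and $E$ (they sum to $0$ there), and then, because every edge of $H'$ contains $x_n$, restricting $D$ and $E$ to the first $n-1$ coordinates yields a decomposition of $C$ into two $1$-covers of $H$, contradicting the choice of $C$. Third, $x_k\overline{C'}$ must be a sum of two $1$-covers for each $k\le n$: for $k<n$ a witnessing decomposition of $x_k\overline{C}$ over $H$ lifts to $H'$ by appending a $0$ in the $x_n$-coordinate, while for $k=n$ one writes $x_n\overline{C'}$ as the sum of the cover having a single $1$ in the $x_n$-coordinate (a $1$-cover of $H'$, as every edge contains $x_n$) and $C'$ (a $1$-cover, being a $2$-cover). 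The remaining hypothesis of Theorem~\ref{finder}, concerning $W\notin P$, is automatic here because $P$ is the maximal ideal, so the only such $W$ is a unit and $\overline{C'}$ is already known not to be a sum of two $1$-covers. I expect this coning verification, in particular the reduction of a decomposition over $H'$ to one over $H$ via the forced vanishing of the new coordinate, to be the main point requiring care; the rest is bookkeeping.

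It remains to deduce the general statement. Given a monomial prime $P=(x_{i_1},\dots,x_{i_r})\subset K[x_1,\dots,x_n]$ with $r=\mathrm{height}(P)\ge m$, the essential case furnishes an $m$-hypergraph $H$ of size $r$ on the variables $x_{i_1},\dots,x_{i_r}$ for which $(x_{i_1},\dots,x_{i_r})$ is an associated prime of $(I(H)^\vee)^2$. Let $H'$ have vertex set $\{x_1,\dots,x_n\}$ and the same edge set as $H$. Since adjoining isolated vertices does not change the minimal vertex covers, $H$ and $H'$ have the same Alexander-dual generators, so $(I(H')^\vee)^2=(I(H)^\vee)^2\,K[x_1,\dots,x_n]$ and $P=(x_{i_1},\dots,x_{i_r})K[x_1,\dots,x_n]$; hence $P$ is an associated prime of $(I(H')^\vee)^2$, exactly as in the concluding extension argument of the proof of Theorem~\ref{thm2}.
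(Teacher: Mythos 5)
Your proof is correct, and its engine is the same as the paper's, but you route two steps differently. The essential case matches: your one-vertex-at-a-time coning, iterated $m-3$ times, produces exactly the hypergraph the paper builds in a single shot (edges of $H'$ are old edges together with all $m-3$ new universal vertices, covers extended by zeros), and your inductive verification is equivalent bookkeeping; in fact you are more careful than the paper in two spots. First, you actually prove that $C'$ is not a sum of two $1$-covers (forced vanishing of the new coordinate, then restriction of the decomposition to a decomposition of $C$ over $H$), a claim the paper merely asserts after introducing extensions. Second, by treating $n=m$ separately via the single-edge hypergraph and insisting $n-m+3\geq 4$ otherwise, you avoid a wrinkle in the paper, whose construction for $q=m$ starts from a $3$-hypergraph of size $3$ where the maximal ideal is a \emph{minimal} prime, so Theorem~\ref{finder} (stated only for non-minimal associated primes) does not literally supply the witness $2$-cover there. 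Where you genuinely diverge is the globalization from height $q$ to ambient size $n$: the paper adjoins each new vertex $x_j$ via an edge $\{x_{i_1},\dots,x_{i_{m-1}},x_j\}$ and invokes the induced-subhypergraph theorem (Theorem~\ref{indt}), whereas you adjoin isolated vertices and use the flat polynomial extension $(I(H')^\vee)^2=(I(H)^\vee)^2K[x_1,\dots,x_n]$ --- the same device the paper itself uses to deduce Theorem~\ref{thm2} from Theorem~\ref{t2}, and correctly justified since associated primes extend along such ring extensions. Your version is shorter and avoids Theorem~\ref{indt} entirely; what the paper's edge-adding version buys is connectivity, which is exactly what Corollary~\ref{cor3} harvests from ``the method in the last paragraph of the proof of Theorem~\ref{multd}.'' Your disconnected construction proves Theorem~\ref{multd} itself perfectly well, but anyone wanting the connected refinement would need to substitute the paper's step (or note that your coning preserves connectivity and only the final isolated-vertex step breaks it).
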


\begin{proof}
Let $P\subset K[x_1,...,x_n]$ be a monomial prime, $m \geq 3$, $P$ an associated prime of $(I(H)^\vee)^2$ for some $m$-hypergraph $H$. Then, since the minimal associated primes of $(I(H)^\vee)^2$ are of height $m$, height$(P)\geq m$.

To prove the rest of the theorem, we will first show that for any $q\geq m$, there exists an $m$-hypergraph $H$ of size $q$ such that $(I(H)^\vee)^2$ has an associated prime of height $q$.

Let $q\geq m$. Then $q-m+3\geq 3$. Thus, by Corollary \ref{cor2} there exists a 3-hypergraph $H$ of size $q-m+3$ such that the the maximal monomial prime is an associated prime of $(I(H)^\vee)^2$. Let $V=\{x_1,...,x_{q-m+3}\}$ be the vertex set of $H$ and $E$ be the edge set of $H$. By Theorem \ref{finder}, there exists a 2-cover of $H$, $C=(a_1,...,a_{q-m+3})$ such that $C$ is not the sum of two 1-covers, but for all $x_i$, $1\leq i \leq q-m+3$, the 2-cover corresponding to $x_i\overline{C}$ is a sum of two 1-covers. Define $H'$ to be the $m$-hypergraph with vertex set $V'=\{x_1,...,x_q\}$ and edge set $E'$ consisting of the sets $S'$ of vertices satisfying $S'\cap \{x_1,...,x_{q-m+3}\}\in E$ and $S'\cap \{x_{q-m+4},...,x_q\}=\{x_{q-m+4},...,x_q\}$. Suppose that $T=(t_1,..,t_{q-m+3})$ is a $k$-cover of $H$. Define a $q$-tuple $T'=(t_1,...,t_{q-m+3},0,...,0)$. $T'$ is a $k$-cover of $H'$. We shall say that $T'$ is the extension of $T$. Also, if $X$, $Y$, and $Z$ are $k$-covers of $H$ such that $X+Y=Z$, it is clear that $X'+Y'=Z'$. Thus $C'$ is a 2-cover of $H'$. Further, $C'$ is not the sum of two 1-covers. For $i\leq q-m+3$, the extension of the 2-cover corresponding to $x_i\overline{C}$ corresponds to $x_i\overline{C'}$. Thus, for $i\leq q-m+3$, $x_i\overline{C'}$ corresponds to a sum of two 1-covers. Further, for $i>q-m+3$, the $q$-tuple $(0,...,0,1,0,...,0)$ with a $1$ in the $i$th component is a 1-cover of $H'$ since $x_i\in S'$ for all $S'\in E'$. Thus $x_i\overline{C'}$ corresponds to a sum of two 1-covers for all $x_i$. Therefore $(I(H')^\vee)^2$ has an associated prime of height $q$. We note here that if $H$ is connected, $H'$ is connected.

Now suppose that $P=(x_{i_1},...,x_{i_q})\subset K[x_1,...,x_n]$ is a monomial prime, with height$(P)\geq m\geq 3$, $W$ an $m$-hypergraph of size $n$ such that the induced sub-hypergraph on $\{x_{i_1},...,x_{i_q}\}$ has an associated prime of size height$(P)$.

Such an $m$-hypergraph $W$ can be constructed as follows: Let $Z$ be an $m$-hypergraph of size $q$=height$(P)$ with vertex set $V=\{x_{i_1},...,x_{i_q}\}$ and edge set $E$ such that $P'=(x_{i_1},...,x_{i_q})\subset K[x_{i_1},...,x_{i_q}]$ is an associated prime of $(I(Z)^\vee)^2$. Let $W$ be the $m$-hypergraph with vertex set $V'=\{x_1,...,x_n\}$ and edge set $E'$ such that $$E=E'\cup\{\{x_{i_1},...,x_{i_{m-1}},x_j\}\mid x_j\in V'-V\}.$$ We note that if $Z$ is connected, $W$ is connected. Then $Z$ is the induced sub-hypergraph of $W$ on $\{x_{i_1},...,x_{i_q}\}$, so by Theorem \ref{indt} $P$ is an associated prime of $(I(W)^\vee)^2$.
\end{proof}

Note Corollary \ref{cor2} also extends to $m$-hypergraphs, that is
\begin{Corollary}\label{cor3}
For any integers $n$, $q$, and $m$, $n\geq q\geq m\geq 3$, there exists a connected $m$-hypergraph of size $n$ such that the square of the Alexander dual of the edge ideal has an associated prime of height $q$.
\end{Corollary}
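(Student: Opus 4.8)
The plan is to combine the two main constructions we already have, namely Theorem~\ref{multd} (which produces, for every $q\geq m\geq 3$, an $m$-hypergraph of size exactly $q$ with an associated prime of height $q$) and Theorem~\ref{indt} (the induced-subhypergraph transfer result), but with attention to maintaining \emph{connectedness} throughout. The key observation is that both Corollary~\ref{cor2} and the proof of Theorem~\ref{multd} already track connectedness at each stage: Corollary~\ref{cor2} gives connected 3-hypergraphs of any size with an associated prime of any admissible height, and the construction in Theorem~\ref{multd} explicitly notes ``if $H$ is connected, $H'$ is connected'' and ``if $Z$ is connected, $W$ is connected.'' So the real content is to verify that nothing in the size-padding step breaks connectedness.

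First I would fix integers $n\geq q\geq m\geq 3$. By the first half of the proof of Theorem~\ref{multd}, together with the connectedness remark therein, there is a \emph{connected} $m$-hypergraph $Z$ of size exactly $q$ with vertex set $\{x_{i_1},\dots,x_{i_q}\}$ such that the maximal monomial prime $(x_{i_1},\dots,x_{i_q})$ is an associated prime of $(I(Z)^\vee)^2$, i.e.\ $Z$ has an associated prime of height $q$. (This in turn rests on the connectedness claim in Corollary~\ref{cor2} for the base 3-hypergraph.) Then I would pad $Z$ out to size $n$ exactly as in the second construction of Theorem~\ref{multd}: form $W$ on vertex set $\{x_1,\dots,x_n\}$ by adjoining, for each new vertex $x_j\in\{x_1,\dots,x_n\}\setminus\{x_{i_1},\dots,x_{i_q}\}$, the edge $\{x_{i_1},\dots,x_{i_{m-1}},x_j\}$.

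The main thing to check is that $W$ is connected and that $Z$ is the induced subhypergraph of $W$ on $\{x_{i_1},\dots,x_{i_q}\}$. Induced-ness is immediate: the added edges each contain a new vertex $x_j$, so no added edge lies entirely inside $\{x_{i_1},\dots,x_{i_q}\}$, hence the induced subhypergraph on those $q$ vertices recovers exactly $Z$. Connectedness is the step I expect to require the most care, though it is still routine: each new vertex $x_j$ sits on the edge $\{x_{i_1},\dots,x_{i_{m-1}},x_j\}$ and is therefore adjacent to $x_{i_1}$, which lies in the connected piece $Z$; so every new vertex attaches directly to the already-connected core, and $W$ is connected. With induced-ness established, Theorem~\ref{indt} immediately promotes the height-$q$ associated prime of $(I(Z)^\vee)^2$ to an associated prime of $(I(W)^\vee)^2$ of the same height $q$, completing the argument. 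The potential obstacle is purely bookkeeping---ensuring the connectedness claims invoked for the base 3-hypergraph and for each extension step genuinely hold---but since those claims are already recorded in the proofs of Corollary~\ref{cor2} and Theorem~\ref{multd}, no new idea is needed.
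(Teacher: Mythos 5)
Your proposal is correct and follows essentially the same route as the paper: start from the connected $3$-hypergraph of Corollary~\ref{cor2}, extend it to a connected $m$-hypergraph of size $q$ via the first construction in the proof of Theorem~\ref{multd}, pad to size $n$ with the edges $\{x_{i_1},\dots,x_{i_{m-1}},x_j\}$ from the last paragraph of that proof, and invoke Theorem~\ref{indt}. The only difference is that you explicitly verify the induced-ness and connectedness of the padded hypergraph, details the paper merely asserts.
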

\begin{proof}
By Corollary \ref{cor2}, we have a connected 3-hypergraph $H$ of size $q-m+3$ with an associated prime of height $q-m+3$. We can extend this to an $m$-hypergraph $H'$ of size $q$ with an associated prime of height $q$ using the method in the proof of Theorem \ref{multd}. Since $H$ is connected, $H'$ is connected. By the method in the last paragraph of the proof of Theorem \ref{multd}, we can extend $H'$ to an $m$-hypergraph $H''$ of size $n$ with an associated prime of height $q$. Further, since $H'$ is connected, $H''$ will be connected as well.
\end{proof}

\par\vskip.5truecm\noindent
Ashok Cutkosky  \par\noindent Hickman High School\par\noindent 1104 N Providence rd.\par\noindent Columbia, MO 65203
\par\noindent
imahtarmaca@gmail.com\par\noindent

\end{document}